\newtheorem{theo}{\indent Theorem}[section]
\newtheorem{prop}[theo]{\indent Proposition}
\newtheorem{ex}[theo]{\indent Example}
\newtheorem{ass}[theo]{\indent Assumption}
\def\D{{\mathbb D}}
\def\bM{{\mathbf{M}}}
\def \E{I\!\!E}
\def \P{I\!\!P}
\newcommand{\R}{\mathbb {R}}
\date{September 13, 2019}
\author{A. Galves \and E. L\"ocherbach  \and C. Pouzat \and E. Presutti}
\begin{document}
\title{A system of interacting neurons with short term synaptic facilitation.}

\address{A. Galves: Universidade de S\~ao Paulo.}

\email{galves@usp.br}

\address{E. L\"ocherbach: Universit\'e de Paris 1 Panth\'eon Sorbonne.}

\email{eva.locherbach@univ-paris1.fr}

\address{C. Pouzat: Universit\'e Paris Descartes.}

\email{christophe.pouzat@parisdescartes.fr}

\address{E. Presutti: Gran Sasso Science Institute.}

\email{errico.presutti@gmail.com}

\subjclass[2010]{60K35, 60G55, 60J75}

\keywords{Systems of spiking neurons. Short term plasticity. Piecewise deterministic Markov processes. Mean-field interaction. Biological neural nets. 
Interacting particle systems. Hawkes processes. }
\maketitle

\begin{center}
{\bf Abstract}
\end{center}
In this paper we present a simple microscopic stochastic model
describing short term plasticity within a large homogeneous network of
interacting neurons. Each neuron is represented by its membrane
potential and by the residual calcium concentration within the cell at
a given time. Neurons spike at a rate depending on their membrane
potential. When spiking, the residual calcium concentration of the
spiking neuron increases by one unit. Moreover, an additional amount
of potential is given to all other neurons in the system. This amount
depends linearly on the current residual calcium concentration within
the cell of the spiking neuron. In between successive spikes, the
potentials and the residual calcium concentrations of each neuron
decrease at a constant rate.

We show that in this framework, short time memory can be described as
the tendency of the system to keep track of an initial stimulus by
staying within a certain region of the space of configurations during
a short but macroscopic amount of time before finally being kicked out
of this region and relaxing to equilibrium. The main technical tool is
a rigorous justification of the passage to a large population limit
system and a thorough study of the limit equation.

\maketitle
\section{Introduction}
In this paper we present a simple microscopic stochastic model describing short term plasticity within a large network of interacting neurons. In this framework it is possible to describe short time memory of the system in a precise mathematical way. Namely, short time memory can be seen as the tendency of the system to keep track of an initial stimulus by staying within a certain region of the space of configurations during a short but macroscopic amount of time before finally being kicked out of this region and relaxing to equilibrium.

In our model, the successive times at which the neuron emits an action potential are described by a point process. The stochastic spiking intensity of the neuron, i.e., the infinitesimal probability of emitting an action potential during the next time unit, conditionally on the past, depends on the past history of the neuron and it is affected by the activity of  other neurons in the network, either in an excitatory or an inhibitory way. 

Short term synaptic plasticity (STP) refers to a change in the synaptic efficacy on timescales which are of the order of milliseconds, that is, comparable to the timescale of the spiking activity of the network. We express this through the fact that the stochastic spiking intensity also depends on the synaptic efficacy of the neuron at that time. This synaptic efficacy changes over time as a function of the residual calcium concentration within the cell. In our model the residual calcium concentration increases by one unit any time the neuron spikes and decreases at a constant rate in between successive spikes. 

Since at least the last two decades, many papers have been devoted to STP. Probably starting with Markram and Tsodyks (1996) \cite{markram96} and Tsodyks et al. (1998)\cite{tm2},  a lot of these papers propose relatively simple phenomenological models and study, mostly numerically, their properties. Kistler and van Hemmen (1999) \cite{van}  consider a deterministic model which is an adaptation of the model of Tsodyks and Markram (1997) \cite{tm}  to the spike response model. They work within a homogenous strongly connected network and study the impact of STP on the stability of limit cycles.  Our model, though stochastic, is close to this. Several recent papers are devoted to the study of the effect of STP on working memory, see Barak and Tsodyks (2007) \cite{BT}, Mongillo et al. (2008) \cite{Mongillo} and the recent article by Seeholzer et al. (2018) \cite{see}. Finally, for a recent survey on STP, we refer the interested reader to the Scholarpedia article \cite{scholar}, and for a rather complete survey on the biological aspects, to Zucker and Regehr (2002) \cite{zucker}. 

Our model can be seen as a huge system of interacting pairs of coupled Hawkes processes. Hawkes processes provide good models for systems of spiking neurons by the structure of their intensity processes and have been widely studied, see for instance Chevallier et al.\ (2015) \cite{ccdr}, Chornoboy et al.\  (1988) \cite{chorno},  Hansen et al.\ (2015)  \cite{hrbr}, Reynaud-Bouret et al. (2014) \cite{pat} and Ditlevsen and L\"ocherbach (2017) \cite{SusanneEva}. 

In our model we make the following basic mathematical assumptions. First of all, we work within a mean-field system in which each neuron interacts with all other neurons in a homogenous way. Second, we only consider excitatory synapses. Finally, our model of STP only describes facilitation, not depression. In this framework we study  in a rigorous way the intermediate time behavior of the process. This is the content of our main result,  Theorem \ref{theo:4}. The main step of the proof of this theorem is a rigorous justification of the passage to a large population limit model. 

Describing short term memory as the tendency of the system to stay within a certain region of the state space representing some initial stimulus is not a new idea, neither is the idea that a proof of this behavior should be done through the analysis of the limit system. Similar ideas appear already in Barak and Tsodyks (2007) \cite{BT}, Mongillo et al. (2008) \cite{Mongillo} and Seeholzer et al. (2018) \cite{see}, see also the recent paper by Schmutz et al. (2018) \cite{Schmutz}.  Nevertheless to the best of our knowledge our paper is the first in which these results are rigorously mathematically proved. 

{\bf Organisation of the paper.}
This paper is organised as follows. In Section 2, we introduce our model and state the main results of the paper. This section is complimented by a simulation study. The proofs are given in Sections 3--7.

\section{Overview of the paper.} 
\subsection{Notation}
The following notation is used throughout the paper. 
\begin{itemize}
\item If $ M $ is a counting measure on $\R_+ \times \R_+ , $ we shall use the following notation for integration over semi-closed boxes. $ \int_0^t \int_0^\infty f(s, z ) M(ds, dz ) = \int_{\R_+} \int_{\R_+ } 1_{ ] 0, t ] }  (s) f(s, z ) M( ds, dz) ,$ for any positive measurable function $f.$  
\item If $ g $ is a probability measure on $ (\R_+ , {\mathcal B} ( \R_+) )   $ and $ \int_{\R_+} |f| d g < \infty,  $  then we write $ \int_0^\infty f(r) g (dr) $ for the integral $ \int_{\R_+} f d g .$ 
\item For any bounded function $ f : \R_+ \to \R  $ we write $\| f\|_\infty = \sup_{ x \in \R_+}  |f(x) |.$
\end{itemize}

\subsection{Description of the model}
We consider a system of $N$ interacting neurons with membrane potentials $ (U_t^N (1), \ldots, U^N_t (N) ) $ together with the  corresponding residual calcium concentration within each neuron  $ (R_t^N (1), \ldots , R^N_t(N) ). $ 
Each neuron $i,$ independently of the others, spikes at rate $ \varphi (U^N_{t-} ( i ) ).$ When spiking, it gives an additional amount $ \alpha R_{t-}^N (i )  / N $ of potential to all neurons. ($R$ is dimensionless  and $ \alpha $ is a potential). $\alpha > 0$ is a measure of the interaction strength, and the interaction is modulated by the current value of the calcium concentration of the spiking neuron. At the same time, the residual calcium concentration of the spiking neuron is increased by $ 1.$ This models the short term plasticity. In between successive spikes, the potential of each neuron decreases at rate $ \beta > 0 ,$ and their residual calcium concentrations decrease at constant rate  $\lambda > 0 .$ $\beta $ and $\lambda$ are homogeneous to the inverse of a time. 

To define the process, consider a family of i.i.d.\ Poisson random measures
$(\bM^i(ds,  dz))_{i\geq 1 }$ on $\R_+  \times  \R_+ $ having intensity measure $ds dz$ each. Here, $ds$ is homogeneous to a time and $dz $ to the inverse of a time. Finally we consider an i.i.d.\ family $(U^{N}_0 (i ) , R^{N }_0 (i )  )_{i=1,\dots,N}$ of $\R^2 _+ $-valued random variables, 
independent of
the Poisson measures and distributed according to some probability measure $ \eta_0 ( du, dr) $ on $\R_+^2.$ 
Then the system of interaction neurons is represented by the Markov process
$(U^N_t, R^N_t)  = (U^{N}_t (1) , \ldots , U^{N}_t (N) , R^{N }_t (1) , \ldots , R^{N}_t (N)   )$
taking values in $\R_+^{2N} $ and solving, for $i=1,\dots,N$, for $t\geq 0$,
\begin{eqnarray}\label{eq:dyn}
U^{N}_t (i )  &= & U^{N}_0 (i)  - \beta \int_0^t  U^{N}_s (i)    ds + \frac{\alpha}{N}\sum_{ j =1 }^N \int_0^t  \int_0^\infty  R^{N}_{s-} (j)  1_{ \{ z \le  \varphi ( U^{N}_{s-} (j) ) \}} \bM^j (ds,  dz) ,   \\
R^{N }_t (i)  &=& R^{N }_0 (i )  - \lambda \int_0^t  R^{N}_s (i )    ds  +   \int_0^t \int_0^\infty 
  1_{ \{ z \le  \varphi  ( U^{N}_{s-} (i ) ) \}} \bM^i (ds, dz) . 
\nonumber
\end{eqnarray}   
The coefficients of this system are the positive constants $ \alpha , \beta , \lambda > 0 $ together with the spiking rate function $ \varphi . $  The generator of the process $(U_t^N, R_t^N)$   
is given for any smooth test function $f : \R_+^{ 2N} \to \R $ by 
$$
A^N f( u, r ) = - \sum_{ i=1}^N \left( \beta u^i \partial_{u^i } f (u, r) + \lambda r^i \partial_{r^i} f( u, r) \right) + 
 \sum_{i=1}^N \varphi ( u^i ) \left[ f ( \pi^i ( u, r) ) - f (u, r ) \right] ,
$$
where
$$ \pi^i ( u, r)  (j) := ( u^j + \alpha r^i / N ,r^j) \mbox{ if } j \neq i , \pi^i  ( u, r ) (i) := (u^i + \alpha r^i /N , r^i + 1 ) .$$
Notice that \eqref{eq:dyn} is close to the system studied in \cite{van}, when taking the Heaviside function as a spiking rate (that is, spiking does only occur when reaching a fixed deterministic threshold, but when hitting this threshold, it occurs with certainty, that is, at rate $ = \infty $). 
In the present paper, following the classical reference Brillinger and Segundo (1979) \cite{Brillinger}, we will however suppose most of the time that 
\begin{ass}\label{ass:1}
$\varphi : \R_+ \to \R_+ $ is bounded and Lipschitz continuous with Lipschitz constant $L_\varphi .$ Moreover we have $ \varphi ( 0) = 0,$ $\varphi ( x) > 0 $ for all $ x > 0.$ 
\end{ass}
Under minimal regularity assumptions on the spiking rate, if we work at a fixed system size $N,$ this process will die out in the long run as shows the following
\begin{theo}\label{prop:dieout}
Grant Assumption \ref{ass:1}. If $ \varphi $ is differentiable in $ 0, $ then the system stops spiking almost surely. As a consequence, the unique invariant measure of the process $(U_t^N, R_t^N ) $ is given by  the Dirac measure $\delta_{(\bf 0 , \bf 0)}, $
where $\bf 0 \in \R^N $ denotes the all-zero vector in $\R^N .$ 
\end{theo}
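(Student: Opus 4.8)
The plan is to show, in this order, that the process is globally well defined, that it almost surely stops spiking, and then to deduce uniqueness of the invariant law. Since $\varphi$ is bounded, the total spiking rate of the $N$ neurons never exceeds $N\|\varphi\|_\infty$, so the number of spikes on any bounded interval is stochastically dominated by a Poisson variable and is a.s.\ finite; hence $(U^N,R^N)$ is a well-defined piecewise deterministic Markov process on $[0,\infty)$, with spike times $\tau_1<\tau_2<\cdots$ having no accumulation point, and $Z^N_\infty:=\lim_{t\to\infty}Z^N_t\in\{0,1,2,\dots\}\cup\{\infty\}$, where $Z^N_t$ counts the spikes in $[0,t]$. ``The system stops spiking'' is exactly the event $\{Z^N_\infty<\infty\}$, so the goal is $P(Z^N_\infty=\infty)=0$; by conditioning on $(U^N_0,R^N_0)$ it suffices to treat a deterministic initial condition. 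The basic building block is that, by standard properties of piecewise deterministic Markov processes, on $\{Z^N_\infty\ge K\}$, where $U^N_t(i)=U^N_{\tau_K}(i)e^{-\beta(t-\tau_K)}$ as long as there is no further spike, one has
$$
q_K:=P\big(\tau_{K+1}=\infty\mid\mathcal F_{\tau_K}\big)=\exp\Big(-\frac1\beta\sum_{i=1}^N\int_0^{U^N_{\tau_K}(i)}\frac{\varphi(v)}{v}\,dv\Big)\ \ge\ \exp\Big(-\frac{L_\varphi}{\beta}\,P_{\tau_K}\Big),
$$
where $P_t:=\sum_{i=1}^N U^N_t(i)$; the integral is finite since $\varphi$ is Lipschitz with $\varphi(0)=0$ (which is also the place where differentiability at $0$ enters), so that $\varphi(v)/v\le L_\varphi$. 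Moreover $\mathbf 1_{\{Z^N_\infty\ge K\}}q_K=P(Z^N_\infty=K\mid\mathcal F_{\tau_K})$, which gives the telescoping identity $\sum_{K\ge0}E[\mathbf 1_{\{Z^N_\infty\ge K\}}q_K]=P(Z^N_\infty<\infty)\le1$.

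The heart of the proof is the \emph{a priori} bound $\sup_{t\ge0}E[P_t+S_t]<\infty$, with $S_t:=\sum_{i=1}^N R^N_t(i)$. Summing the two equations of \eqref{eq:dyn} over $i$ yields $S_t=S_0-\lambda\int_0^t S_s\,ds+Z^N_t$ and $P_t=P_0-\beta\int_0^t P_s\,ds+\alpha\int_0^t\int_0^\infty\sum_i R^N_{s-}(i)\mathbf 1_{\{z\le\varphi(U^N_{s-}(i))\}}\bM^i(ds,dz)$. Taking expectations and using that the compensator of $Z^N$ has density $\sum_i\varphi(U^N_s(i))\le N\|\varphi\|_\infty$, one gets $\frac{d}{dt}E[S_t]\le-\lambda E[S_t]+N\|\varphi\|_\infty$, hence $\sup_tE[S_t]\le S_0+N\|\varphi\|_\infty/\lambda$; then, using $\sum_i R^N_s(i)\varphi(U^N_s(i))\le\|\varphi\|_\infty S_s$, $\frac{d}{dt}E[P_t]\le-\beta E[P_t]+\alpha\|\varphi\|_\infty E[S_t]$, hence $\sup_tE[P_t]\le P_0+\frac{\alpha\|\varphi\|_\infty}{\beta}\sup_tE[S_t]<\infty$. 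Crucially this requires \emph{no} subcriticality condition on $\alpha$: the calcium equation is driven by the rate $\varphi(U^N)\le\|\varphi\|_\infty$ irrespectively of $\alpha$, which severs the facilitation feedback loop at the level of first moments. By Fatou, $\liminf_{t\to\infty}(P_t+S_t)<\infty$ a.s.; since $t\mapsto P_t+S_t$ jumps only upward (at spike times) and decreases in between, on $\{Z^N_\infty=\infty\}$ (where $\tau_K\to\infty$) this $\liminf$ equals $\liminf_K(P_{\tau_K-}+S_{\tau_K-})$, and from $P_{\tau_K}\le P_{\tau_K-}+\alpha S_{\tau_K-}$ (the jump of $P$ at a spike is $\alpha$ times the spiking neuron's calcium, hence $\le\alpha S_{\tau_K-}$) we get $\liminf_K P_{\tau_K}<\infty$ a.s.\ on $\{Z^N_\infty=\infty\}$.

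To conclude: on $\{Z^N_\infty=\infty\}$ pick a (random) subsequence $K_j\uparrow\infty$ along which $P_{\tau_{K_j}}$ stays bounded; then $q_{K_j}$ is bounded below by a strictly positive constant along that subsequence, so $\sum_{K\ge0}q_K=\infty$ a.s.\ on $\{Z^N_\infty=\infty\}$. Combined with the telescoping identity, $1\ge\sum_{K\ge0}E[\mathbf 1_{\{Z^N_\infty\ge K\}}q_K]\ge E\big[\mathbf 1_{\{Z^N_\infty=\infty\}}\sum_{K\ge0}q_K\big]$, whose right-hand side is infinite unless $P(Z^N_\infty=\infty)=0$; hence the system stops spiking almost surely. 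For the invariant measure: on $\{Z^N_\infty<\infty\}$ there is a last spike at some $T<\infty$, after which $U^N_t(i)=U^N_T(i)e^{-\beta(t-T)}\to0$ and $R^N_t(i)=R^N_T(i)e^{-\lambda(t-T)}\to0$, so $(U^N_t,R^N_t)\to(\mathbf 0,\mathbf 0)$ a.s.\ from any initial law. Since $(\mathbf 0,\mathbf 0)$ is absorbing ($\varphi(0)=0$, and $\mathbf 0$ is a fixed point of the deterministic flow), $\delta_{(\mathbf 0,\mathbf 0)}$ is invariant; and for any invariant $\nu$ and any bounded continuous $f$, $\int f\,d\nu=\int P_tf\,d\nu\to f(\mathbf 0,\mathbf 0)$ by dominated convergence, so $\nu=\delta_{(\mathbf 0,\mathbf 0)}$.

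The step I expect to require the most care is $P(Z^N_\infty=\infty)=0$ for \emph{all} parameter values: because short-term facilitation makes every spike enlarge the calcium and therefore the interaction strength, a direct estimate of $E[Z^N_\infty]=E[\int_0^\infty\sum_i\varphi(U^N_s(i))\,ds]$, or a Foster--Lyapunov drift argument, only closes under the subcritical condition $\alpha L_\varphi\|\varphi\|_\infty<\beta\lambda$. The almost-sure route above avoids this, at the cost of handling the measurability of $q_K$ and the (mildly self-referential) survival-probability computation, and of justifying the differential inequalities for $E[S_t]$ and $E[P_t]$ (e.g.\ via absolute continuity in $t$ and Gronwall).
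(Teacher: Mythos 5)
Your proof is correct and follows essentially the same route as the paper's: an explicit lower bound $\exp\bigl(-\tfrac{1}{\beta}\sum_i\int_0^{U(i)}\varphi(v)/v\,dv\bigr)$ on the probability of never spiking again (finite because $\varphi$ is Lipschitz with $\varphi(0)=0$), combined with almost-sure recurrence of the state to bounded sets and a Borel--Cantelli-type argument. The paper only sketches this and delegates the details to Theorem 2.3 of Duarte and Ost (2016); your write-up supplies them in a self-contained way, proving recurrence via the first-moment a priori bounds plus Fatou and replacing the conditional Borel--Cantelli step by the equivalent telescoping identity.
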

This situation might however change if we consider large-population limits of the system. 
\subsection{Large population limits}
In Section \ref{sec:convergence} we show that 
the solution $(U^N_t, R^N_t)_{t\geq 0}$ behaves, for $N$ large, as $N$ independent copies
of the solution $(U_t, R_t)_{t\geq 0}$ of the following {\it nonlinear}, in the sense of McKean, SDE 
\begin{eqnarray}\label{eq:dynlimit}
U_t &=& U_0 - \beta \int_0^t U_s ds + \alpha \int_0^t   \E[\varphi (U_s) R_s ]ds, \\
R_t &=& R_0 - \lambda \int_0^t R_s ds +   \int_0^t \int_0^\infty 
  1_{ \{ z \le  \varphi  ( U_{s-}) \}} \bM (ds, dz) \nonumber .
\end{eqnarray} 
In the above formula, $(U_0, R_0)$ is an $\eta_0$-distributed random variable, independent of a Poisson measure $\bM (ds,dz)$ on $\R_+ \times \R_+ $ having intensity measure $ ds dz$. Concerning the law $ \eta_0$ of the initial condition, in the sequel we impose 
\begin{ass}\label{ass:2}
$ \eta_0 (du, dr ) = \delta_{u_0 } (du ) g_0 ( dr ) , $ for some fixed $ u_0 > 0.$ Here, $ g_0 $ is a probability measure on $\R_+ $ such that  $ \int_0^\infty r^2 g_0 (dr) < \infty .$   
\end{ass}
In what follows, under Assumption \ref{ass:2}, we shall write $ r_0 := \int_0^\infty r g_0 (dr) . $ 
Under this condition, our Theorems \ref{theo:coupling} and \ref{theo:couplingbis} in Section 5 below provide an explicit coupling showing that the finite  system is  close to the limit system.

\subsection{Modeling short term memory.}
For smooth spiking rate functions $ \varphi , $ by Proposition \ref{prop:dieout},  the finite size system has only one invariant state corresponding to extinction of the system. In the large-population limit however, this situation changes for suitable choices of the form of the spiking rate function. More precisely, we suppose that
\begin{ass}\label{ass:psi}
$\varphi $ is non-decreasing and differentiable with $  \varphi ' (0) > 0  . $
Moreover, there exists a constant $D > 0 $ such that the equation  $ D \varphi^2  ( x) = x $ possesses exactly three solutions $ x_0 = 0 < x_1 <x_2 $ in $[ 0, \infty [  .$ ($D$ is homogeneous to a potential times a time squared.) 
\end{ass}

Let us write $ u^{max} = x_2 ,  r^{max} = \frac{1}{\lambda} \varphi ( u^{max} ) .$ We show in Proposition \ref{lem:attr} below that $ (u^{max}, r^{max} ) $ is an attracting equilibrium of the limit system \eqref{eq:dynlimit}, for suitable choices of $ \alpha, \beta $ and $\lambda.$

Suppose now we observe a huge system of interacting neurons which is undergoing synaptic plasticity modulated by the residual calcium concentrations within each neuron. Hence, $N$ is big without being infinite. 
We expose the system to some initial stimulus pushing it into the vicinity of the attracting non-trivial equilibrium point $ (u^{max} , r^{max} )$ of the limit system. At time $0, $ this stimulus is switched off, and we start observing the system, evolving according to \eqref{eq:dyn}. Since this point is attracting and $N$ large, the system is attracted to a small neighbourhood of $ (u^{max} , r^{max} ) $ and stays in this neighbourhood for a while.  We interpret this transient behavior as an expression of {\it short term memory}. Of course, in the long run, the system will finally get kicked out of this neighbourhood and start rapidly decaying towards the 
all-zero state. These ideas are formalised in the following theorem.

\begin{theo}\label{theo:4}
Grant Assumptions \ref{ass:1} and \ref{ass:2} and suppose moreover that $ \int_0^\infty e^{ u r} g_0 ( dr ) < \infty $ for some $ u > 0 .$ Fix some $T> 1.$ 

1. There exist positive constants $C_T $ and $c$ where $ C_T$ depends only on the parameters of the model and on $T$ and $c$  only on the parameters of the model with the following properties. For any $\varepsilon > 0 ,$ there exists $N_0 $ such that for all $ N \geq N_0, $ for every $1 \le i \le N ,$ 
 \begin{equation}
 \P \left( \sup_{s \le T} \left( | U_s^{N } (i ) - U_s |  + | \frac1N \sum_{j=1}^N R_s^{N } (j )  - \E R_s | \right)\geq  N^{-1/5} \varepsilon  \right) \le \\
C_T e^{ - c  \varepsilon^2 \sqrt{N} } .
\end{equation} 
2. Grant moreover Assumption \ref{ass:psi} and suppose that 
\begin{equation}\label{eq:constraint}
 \alpha  \geq D \beta \lambda . 
\end{equation} 
Then the equation $ x = \frac{\alpha}{\beta \lambda } \varphi^2 (x) $ possesses three solutions $ x_0 = 0 < x_1 < x_2 .$ We put $
u^{max} = x_2 ,  r^{max} = \frac{1}{\lambda} \varphi ( u^{max} ) .$ The points $(0,0)$ and $ (u^{max}, r^{max} ) $ are locally attracting equilibria of  the dynamical system 
\begin{equation}\label{eq:dynlimitbisbis}
 u_t =u_0   - \beta \int_0^t u_s ds +  \alpha \int_0^t \varphi ( u_s) r_s ds, \; r_t =r_0 - \lambda \int_0^t r_s ds + \int_0^t \varphi (u_s) ds  .
\end{equation}
3. Suppose $ (u_0, r_0 ) $ belongs to the domain of attraction of $ (u^{max}, r^{max} ) $ and 
$ U^N_0 (i ) = u_0, $ $ R^{N }_0 (i )  = r_0 $ for all $ i,N .$ Finally let, for $ \varepsilon > 0 ,$ 
$$ t_1 = t_1 ( \varepsilon ) = \inf \{ t : | u_t - u^{max}|+ | r_t -  r^{max}  | < \varepsilon  \}  .$$ 
Then for all $ N \geq N_0, $  for all $ 1 \le i \le  N,  $ 
$$ \P (|  U^N_{t_1} (i ) - u^{max }| \geq  2  \varepsilon    \mbox{  or  }  | \frac1N \sum_{j=1}^N R_{t_1}^{N } (j)  - r^{max} | \geq 2  \varepsilon ) \le C_{t_1} e^{ -c \varepsilon^2   \sqrt{ N}}.$$ 
\end{theo}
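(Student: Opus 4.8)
The three items are of rather different natures, so I would treat them separately, starting with the easiest.

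\emph{Part 2} is a finite-dimensional computation. The equilibria of \eqref{eq:dynlimitbisbis} satisfy $\beta u=\alpha\varphi(u)r$ and $\lambda r=\varphi(u)$, hence $r=\varphi(u)/\lambda$ and $u=\tfrac{\alpha}{\beta\lambda}\varphi^{2}(u)$; granting Assumption~\ref{ass:psi} and \eqref{eq:constraint}, an elementary real-variable argument (using $\varphi(0)=0$, $\varphi'(0)>0$, $\varphi$ non-decreasing and bounded, and comparing $\tfrac{\alpha}{\beta\lambda}\varphi^{2}$ with $D\varphi^{2}$) yields the three roots $0=x_{0}<x_{1}<x_{2}$. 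For the attractivity I would linearise $F(u,r)=(-\beta u+\alpha\varphi(u)r,\,-\lambda r+\varphi(u))$: at $(0,0)$ the Jacobian is lower triangular with eigenvalues $-\beta,-\lambda$; at $(u^{max},r^{max})$, with $r^{max}=\varphi(u^{max})/\lambda$, one computes $\operatorname{tr}=-\beta-\lambda+\tfrac{\alpha}{\lambda}\varphi'(u^{max})\varphi(u^{max})$ and $\det=\beta\lambda-2\alpha\varphi(u^{max})\varphi'(u^{max})$. Since $x_{2}$ is the largest zero of $g(x)=\tfrac{\alpha}{\beta\lambda}\varphi^{2}(x)-x$ we have $g'(x_{2})\le0$, i.e. $2\alpha\varphi(u^{max})\varphi'(u^{max})\le\beta\lambda$; this forces $\det\ge0$ and $\operatorname{tr}\le-\beta/2-\lambda<0$, so both equilibria are linearly stable (with strictly negative real parts away from the degenerate boundary case, handled by a centre-manifold remark), whence local attractivity.

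\emph{Part 1} is the analytic core. The starting observation is that, because $U^{N}_{0}(i)=u_{0}$ for every $i$ and the interaction term in \eqref{eq:dyn} is the same for all neurons, $U^{N}_{t}(i)=:U^{N}_{t}$ does not depend on $i$; writing $\bar R^{N}_{t}=\tfrac1N\sum_{j=1}^{N}R^{N}_{t}(j)$ and compensating the Poisson integrals,
\begin{align*}
U^{N}_{t}&=u_{0}-\beta\int_{0}^{t}U^{N}_{s}\,ds+\alpha\int_{0}^{t}\varphi(U^{N}_{s})\bar R^{N}_{s}\,ds+\mathcal{M}^{N}_{t},\\
\bar R^{N}_{t}&=\bar R^{N}_{0}-\lambda\int_{0}^{t}\bar R^{N}_{s}\,ds+\int_{0}^{t}\varphi(U^{N}_{s})\,ds+\mathcal{N}^{N}_{t},
\end{align*}
where $\mathcal{N}^{N}$ has jumps of size $1/N$ and $\mathcal{M}^{N}=\tfrac{\alpha}{N}\sum_{j}\int_{0}^{\cdot}\!\int_{0}^{\infty}R^{N}_{s-}(j)1_{\{z\le\varphi(U^{N}_{s-})\}}(\bM^{j}(ds,dz)-ds\,dz)$ has jumps of size $\tfrac{\alpha}{N}R^{N}_{s-}(j)$. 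Taking expectations in \eqref{eq:dynlimit} and using that $U_{t}$ is deterministic shows that $(U_{t},\E R_{t})$ solves exactly \eqref{eq:dynlimitbisbis}. Subtracting, bounding $|\varphi(U^{N}_{s})\bar R^{N}_{s}-\varphi(U_{s})\E R_{s}|\le\|\varphi\|_{\infty}|\bar R^{N}_{s}-\E R_{s}|+(\sup_{s\le T}\E R_{s})L_{\varphi}|U^{N}_{s}-U_{s}|$ and using the Lipschitz property of $\varphi$ on the second line, Gronwall's lemma yields, with $C_{T}$ depending only on the model parameters and $T$,
$$\sup_{t\le T}\big(|U^{N}_{t}-U_{t}|+|\bar R^{N}_{t}-\E R_{t}|\big)\le e^{C_{T}}\Big(|\bar R^{N}_{0}-r_{0}|+\sup_{t\le T}|\mathcal{M}^{N}_{t}|+\sup_{t\le T}|\mathcal{N}^{N}_{t}|\Big).$$
It thus remains to bound the three terms on the right with an exponential rate. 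The initial-condition term $|\bar R^{N}_{0}-r_{0}|=|\tfrac1N\sum_{j}R^{N}_{0}(j)-r_{0}|$ is controlled by Bernstein's inequality for the i.i.d. $R^{N}_{0}(j)$ (which have exponential moments by hypothesis), and $\sup_{t\le T}|\mathcal{N}^{N}_{t}|$ by an exponential (Freedman-type) inequality for pure-jump martingales with bounded jumps $1/N$ and predictable quadratic variation $\le\|\varphi\|_{\infty}T/N$; both give rate $e^{-cNx^{2}}$, more than enough. The delicate term is $\mathcal{M}^{N}$, whose quadratic variation $\tfrac{\alpha^{2}}{N}\int_{0}^{\cdot}\varphi(U^{N}_{s})\big(\tfrac1N\sum_{j}R^{N}_{s}(j)^{2}\big)\,ds$ and jumps $\tfrac{\alpha}{N}R^{N}_{s-}(j)$ are both governed by the \emph{unbounded} calcium. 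Here one needs, first, a high-probability bound $\langle\mathcal{M}^{N}\rangle_{T}\le C/N$: since $R^{N}_{s}(j)\le R^{N}_{0}(j)+Q_{j}$ with $Q_{j}$ the number of points of $\bM^{j}$ in $[0,T]\times[0,\|\varphi\|_{\infty}]$ (a $\mathrm{Poisson}(\|\varphi\|_{\infty}T)$ variable, independent across $j$), one has $\sup_{s\le T}\tfrac1N\sum_{j}R^{N}_{s}(j)^{2}\le\tfrac1N\sum_{j}(R^{N}_{0}(j)+Q_{j})^{2}$, and this average concentrates around its finite mean by a Fuk--Nagaev-type estimate; and, second, a truncation of the jumps on $\{\sup_{s\le T,\,j}R^{N}_{s}(j)\le K_{N}\}$, whose complement has probability $\le Ce^{-cK_{N}}$. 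One then applies a Bennett/Freedman exponential inequality to the truncated martingale and optimises $K_{N}$. Carrying this through while keeping the constants in the advertised dependence ($C_{T}$ on $T$, and $c$ on the parameters only) is the main obstacle, and the loss from the natural fluctuation scale $N^{-1/2}$ to the precision $N^{-1/5}$ is the price of this truncation. (The coupling of the finite system with $N$ independent copies of the limit process on a common probability space, and the a priori moment bounds, are provided by Theorems~\ref{theo:coupling}--\ref{theo:couplingbis} and may be invoked.)

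\emph{Part 3} is then immediate. Since $(u_{0},r_{0})$ lies in the domain of attraction of $(u^{max},r^{max})$, the trajectory of \eqref{eq:dynlimitbisbis} converges, so $t_{1}=t_{1}(\varepsilon)<\infty$; apply Part 1 with $T:=t_{1}+1$. On the good event there, $|U^{N}_{t_{1}}(i)-u_{t_{1}}|+|\tfrac1N\sum_{j}R^{N}_{t_{1}}(j)-r_{t_{1}}|<N^{-1/5}\varepsilon\le\varepsilon$, while $|u_{t_{1}}-u^{max}|+|r_{t_{1}}-r^{max}|<\varepsilon$ by definition of $t_{1}$; the triangle inequality gives $|U^{N}_{t_{1}}(i)-u^{max}|<2\varepsilon$ and $|\tfrac1N\sum_{j}R^{N}_{t_{1}}(j)-r^{max}|<2\varepsilon$. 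Hence the bad event of Part 3 is contained in that of Part 1 and has probability $\le C_{t_{1}}e^{-c\varepsilon^{2}\sqrt N}$, as claimed.
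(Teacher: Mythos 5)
Your overall architecture is sound and, for Parts~1 and~2, genuinely different from the paper's. For Part~2 the paper (Proposition \ref{lem:attr}) argues via the null-clines, partitioning the quadrant into five regions and tracking the signs of $\dot u,\dot r$; you linearise instead. Your Jacobian computation is correct, and the observation that $g'(x_2)\le 0$ forces $\det\ge 0$ and $\operatorname{tr}\le -\beta/2-\lambda$ is a nice shortcut; note however that \eqref{eq:constraint} allows $\alpha=D\beta\lambda$, where the tangency $\det=0$ can actually occur, so the "centre-manifold remark" is carrying real weight there — the paper's null-cline argument is insensitive to this degeneracy. For Part~1 the paper goes through an intermediate system $(\tilde U^N,\tilde R^N)$, which creates a Gronwall inequality with the \emph{random} coefficient $\bar R^N_s$ and forces both a truncation term $\int_0^T 1_{\{\bar R^N_s>R\}}\bar R^N_s\,ds$, an extra martingale $M^{N,3}$, and an iteration over small time windows. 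Your single-step comparison of $(U^N,\bar R^N)$ with the deterministic $(u_t,r_t)$, splitting $\varphi(U^N)\bar R^N-\varphi(u)r$ so that each difference is multiplied by a \emph{deterministic} bounded factor ($\|\varphi\|_\infty$ and $r_s$ respectively), makes the Gronwall constant deterministic and is a genuine simplification. Part~3 is the paper's argument verbatim.

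The one genuine gap is in your treatment of $\mathcal M^N=M^{N,1}$, which you correctly identify as the crux. Truncating the individual jumps (of size $\tfrac{\alpha}{N}R^N_{s-}(j)$) on the event $\{\sup_{s\le T,\,j}R^N_s(j)\le K_N\}$ and applying Freedman/Bennett cannot deliver the advertised rate: the complement of the truncation event has probability of order $N e^{-cK_N}$, so you need $K_N\gtrsim\sqrt N$ to keep it below $e^{-c\sqrt N}$; but with jump bound $a=\alpha K_N/N\sim N^{-1/2}$, bracket $L\sim C/N$ and deviation $z\sim\varepsilon N^{-1/4}$, the ratio $az/L\sim\varepsilon N^{1/4}$ puts you in the Poissonian regime of the inequality and the exponent degrades to order $\varepsilon N^{1/4}$, not $\varepsilon^2\sqrt N$; taking $K_N$ small enough to stay sub-Gaussian ($K_N\lesssim N^{1/4}$) makes the complement event dominate instead. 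No choice of $K_N$ closes this. The paper avoids the dilemma by using the Dzhaparidze--van Zanten inequality \eqref{eq:bernstein} with $a=0$, i.e.\ conditioning on the \emph{realised} bracket $H^0_T=[M^{N,1}]_T+\langle M^{N,1}\rangle_T\le c_T/N$ (an event whose complement is controlled through the domination $R^N_s(j)\le R^N_0(j)+J_T(j)$ and the exponential moments of $g_0$), which keeps $\psi(0)=1$ and yields the full sub-Gaussian exponent $\varepsilon^2\sqrt N$. You would need to replace your truncation step by this bracket-conditioning (or an equivalent device) for the stated bound; the rest of your argument then goes through.
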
 

\subsection{An example with a simulation study} 
We consider spiking rate functions of sigmoid type which are defined in terms of a parameter  $a  > 1  $ satisfying
$ 4a  < 1 + e^{ a}   $ 
by
$$ \varphi ( x) = \frac{4a}{ 1 + e^{-(x -a)}} - \frac{4a}{ 1 + e^{ a}} , x \geq 0 .$$
The point $ a$ is the inflexion point of $ \varphi,  $ and it is easy to see that there exist   $ x_1 \in ] 0, a [  $ and  $ x_2  \in ] a, \infty [ $ with $ \varphi^2  ( x_i ) = x_i , i =1, 2 .$ Thus, Assumption \ref{ass:psi} is satisfied with $D=1.$ 
\begin{figure} 
\begin{center}
   \includegraphics[width=15cm]{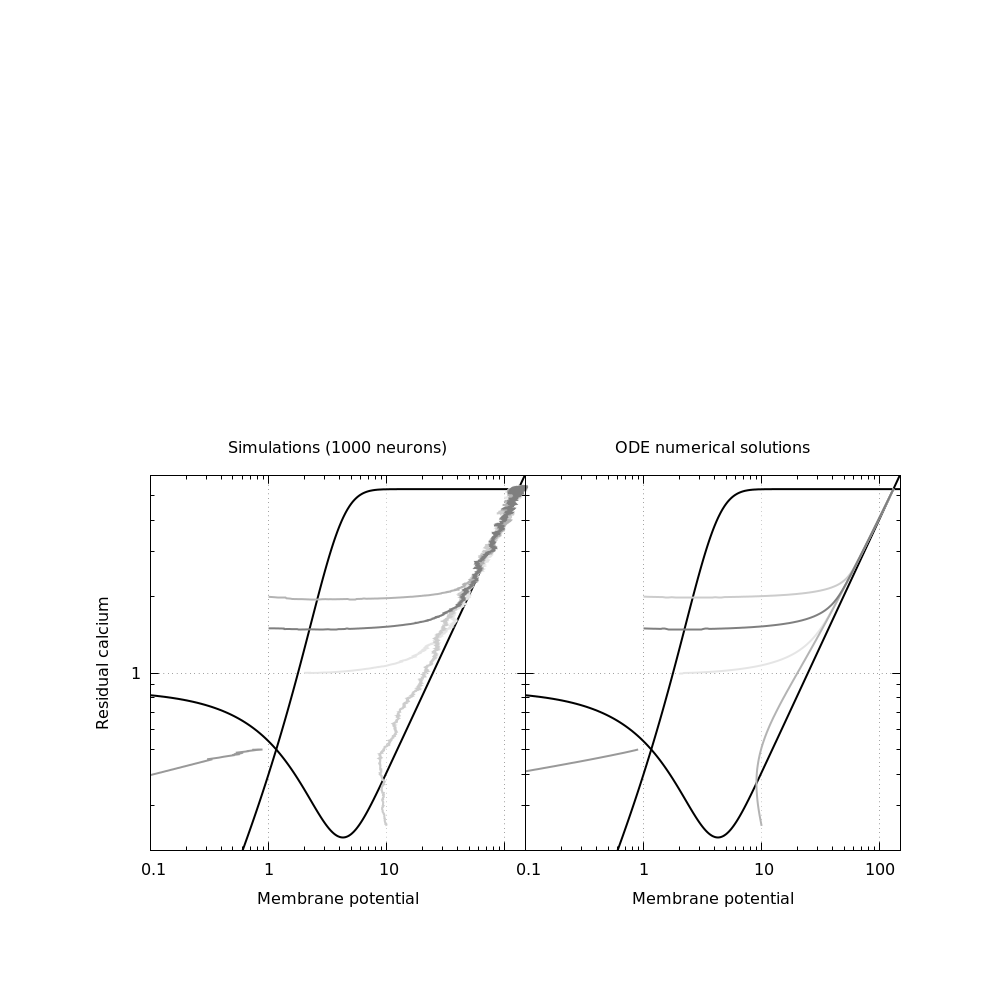} 
\end{center}
\caption{Phase plots on a log-log scale. Left, 5 trajectories (gray lines) of the mean residual calcium versus the mean membrane potential obtained by simulating a network of 1000 neurons from 5 different initial states (see the main text for simulation details). Right, 5 trajectories solutions of the limit ODE system \eqref{eq:dynlimitbisbis} with the same parameters and initial states (gray lines). On both plots the black curves show the null-cline of the mean membrane potential (V shaped) and of the mean residual calcium (inverted L shape).}
\label{fig1}
\end{figure}

Figure \ref{fig1} illustrates 5 trajectories of the mean residual calcium versus the mean membrane potential obtained by simulating a network of 1000 neurons from 5 different initial states on the left side. The null-clines corresponding to a null membrane potential derivative (V shaped) and a null residual calcium derivative (inverted L shaped) are also drawn. On the right side the numerical solution of the corresponding ODE system \eqref{eq:dynlimitbisbis} with the same initial conditions as the right side are shown.
A custom developed C code implementing Ogata's thinning method (see \cite{ogata:81}) was used for the simulations with the Xoroshiro128+ pseudo-random number generator of Blackman and Vigna, see \cite{blackman.vigna:18}. The ODE system \eqref{eq:dynlimitbisbis} was numerically solved using the ode program of the open source GNU plotutils package (https://www.gnu.org/software/plotutils/). The default method -- Runge-Kutta-Fehlberg with adaptive time steps -- was used. The network parameters were: $\alpha=107.78$; $\beta=50$; $\lambda=2.16$ (rounded to the second decimal); $a=3$. The parameters $ \alpha , \beta , \lambda $ are chosen such that $ \alpha = D \beta \lambda .$ The specific choice of $ \alpha , \beta , \lambda $ was arbitrary and guided by aesthetic reasons. For the network simulations the initial states were obtained by drawing the membrane potential and the residual calcium of each neuron from a uniform distribution centred on a user set mean value with a range set to 10\% of the mean. The (membrane potential, residual calcium) pairs were: (2,1); (1,2); (10,0.25); (0.75,0.5); (1,1.5). The initial values of the ODE numerical solution were set to these mean values. The phase plots shown on Figure \ref{fig1} use a log-log scale. The trajectory starting from (0.75,0.5) moves towards the origin: the network activity dies quickly in that case. All the other trajectories reach quickly (in less than 5 time units) the fixed point corresponding to the upper-right intersection of the 2 null-clines.

\begin{figure} 
\begin{center}
   \includegraphics[width=16cm]{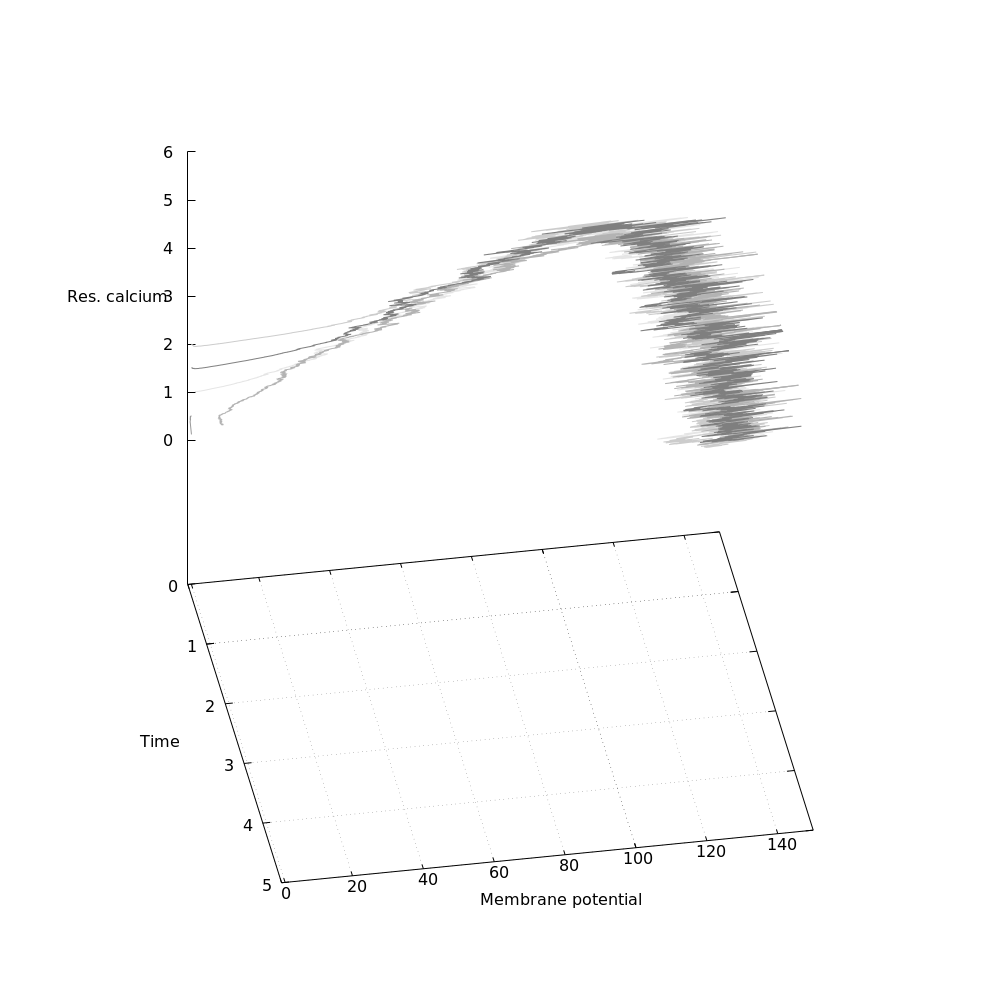} 
\end{center}
\caption{3D plot of 5 trajectories of the mean residual calcium and  the mean membrane potential obtained by simulating a network of 1000 neurons from 5 different initial states.}
\label{fig2}
\end{figure}
Figure \ref{fig2} shows the simulated trajectories of same network in 3D using linear scales. All the codes and instructions required to reproduce these simulations and figures can be found at the following address: {\tt https://plmlab.math.cnrs.fr/xtof/interacting\_neurons\_with\_stp}.

\subsection{Constants} In the whole paper, $C$ stands for a (large) finite constant
and $c$ stands for a (small) positive constant. Their values may change from line to line.
They are 
allowed to depend only on $\varphi , \alpha , \beta , \lambda $ and $\eta_0$, the law of the initial condition. Any other dependence will be 
indicated in subscript. For example, $C_T$ is a finite constant depending only on
$\varphi , \alpha , \beta , \lambda, \eta_0 $ and $T.$ The letter  $ K$ will be reserved to denote a bound on $ \| \varphi \|_\infty  .$

\section{Well-posedness of the particle system and proof of Theorem \ref{prop:dieout}} 
\begin{prop}
Under Assumption \ref{ass:1} a path-wise unique strong Markov process $ (U^{N}_t , R^{N }_t )$ exists which is solution of \eqref{eq:dyn} for all $ t \geq 0.$ 
\end{prop}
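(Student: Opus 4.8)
The plan is to construct the solution explicitly, by interlacing the deterministic between-jump flow with the jumps produced by thinning the Poisson measures $\bM^j$, the whole argument resting on the single fact that $\varphi$ is bounded. Write $K = \| \varphi \|_\infty$. Since $1_{\{ z \le \varphi ( U^N_{s-}(j)) \}} \le 1_{\{ z \le K \}}$, the point process recording all candidate firing times of all $N$ neurons up to time $t$ is dominated by $\sum_{j=1}^N \bM^j ( [0,t] \times [0,K] )$, a homogeneous Poisson process of rate $NK$. Hence its points $0 = T_0 < T_1 < T_2 < \dots$ do not accumulate on bounded intervals, so $T_n \uparrow \infty$ almost surely.

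Granting this, I would define $(U^N, R^N)$ by recursion over the $T_n$: starting from the initial value in $\R_+^{2N}$, on each interval $[T_n, T_{n+1})$ set $U^N_t(i) = U^N_{T_n}(i) e^{-\beta (t - T_n)}$ and $R^N_t(i) = R^N_{T_n}(i) e^{-\lambda(t-T_n)}$ (the unique solution of the linear ODEs $\dot u = -\beta u$, $\dot r = -\lambda r$); and at the next firing time $T_{n+1}$, produced by an atom $(T_{n+1}, z)$ of some $\bM^j$ with $z \le \varphi(U^N_{T_{n+1}-}(j))$ --- the firing index $j$ being almost surely unique --- apply the transition $\pi^j$ to the state. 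Because the $T_n$ do not accumulate this defines the process on all of $[0,\infty)$. One then checks that the trajectory stays in $\R_+^{2N}$ --- the potentials decay exponentially and jump upward by $\alpha R^N_{s-}(j)/N \ge 0$, while each $R^N(i)$ decays exponentially and jumps upward by $1$, starting from $R^N_0(i) \ge 0$ --- so that the intensity $\varphi(U^N_{s-}(j))$ is well defined along the path, and finally that the process so built satisfies the integral system \eqref{eq:dyn}.

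For pathwise uniqueness I would argue that any solution of \eqref{eq:dyn} driven by the same Poisson measures and started from the same point also has its firing times dominated by the rate-$NK$ Poisson process, hence non-accumulating, so that an induction over the firing times applies: between two consecutive firing times such a solution must solve the same linear ODEs, whose solution is unique, and at a firing time it is forced to jump by the corresponding $\pi^j$; therefore it coincides with the process constructed above. The strong Markov property is then inherited from this construction --- the process is a piecewise deterministic Markov process with bounded jump rate and smooth (linear, contracting) flow, so the standard theory of such processes applies; equivalently it can be read directly from the fact that, after any stopping time, the dynamics is driven by the increments of the i.i.d.\ Poisson measures past that time. The one step that genuinely needs an argument rather than bookkeeping is the non-accumulation of jumps, and this is precisely where the boundedness of $\varphi$ in Assumption \ref{ass:1} is essential; the Lipschitz hypothesis plays no role in this particular statement.
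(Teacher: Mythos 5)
Your proof is correct, but it is not the paper's proof: the paper disposes of this proposition in one line by citing Theorem 9.1 in Chapter IV of Ikeda and Watanabe (1989), a general existence--uniqueness theorem for SDEs driven by Poisson random measures, whereas you give the self-contained interlacing/thinning construction. The comparison is instructive. Your route exploits the one structural feature that makes this system easy: because $\varphi \le K$, the total candidate jump activity is dominated by a rate-$NK$ Poisson process, so the equation has finite activity and can be solved path by path, alternating the explicit exponential flow with the accepted jumps $\pi^j$; uniqueness and the (strong) Markov property then come for free from the recursion over the non-accumulating candidate times. This is more elementary and, as you correctly observe, uses only boundedness and measurability of $\varphi$ --- the Lipschitz hypothesis in Assumption \ref{ass:1} is not needed here (the linear drift is Lipschitz on its own). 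It is also arguably more honest than the bare citation: the jump coefficient $1_{\{z \le \varphi(U^N_{s-}(j))\}}$ is not Lipschitz in the state, so invoking the Ikeda--Watanabe theorem verbatim glosses over exactly the finite-activity reduction you carry out explicitly. What the citation buys is brevity and a ready-made strong Markov property; what your construction buys is transparency, weaker hypotheses, and the explicit jump-time skeleton that the rest of the paper (domination by $\check R^N$, the extinction argument in Theorem \ref{prop:dieout}) implicitly relies on. The only cosmetic slip is that your $T_n$ are the \emph{candidate} times of the dominating process, at which one must still perform the accept/reject test $z \le \varphi(U^N_{T_{n+1}-}(j))$; you conflate them once with "firing times", but the construction you describe is the right one.
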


\begin{proof}
This follows from Theorem 9.1 in Chapter IV of Ikeda and Watanabe (1989) \cite{IW}.
\end{proof}

We now proceed with the 

\begin{proof}[Proof of Theorem \ref{prop:dieout}]
The proof works along the lines of the proof of Theorem 2.3 of Duarte and Ost (2016) \cite{do}. 

Firstly, it is immediate to show that almost surely the process comes back to a compact set infinitely often. This follows from the boundedness of $\varphi $ and the back driving force induced by the two drift terms $ - \beta U_t^{N } (i)  dt $ and $ - \lambda R^{N }_t (i )  dt .$ 

Let then $T_1 $ be the first jump time of the system. Then, as in Proposition 3.1 of \cite{do}, for any $ c > 0, $ 
$$ \inf_{ u_0 \in [0, c ]^N } \P ( T_1 = \infty | U^N_0 = u_0 ) \geq e^{- \frac{N}{\beta} \int_0^c \frac{\varphi (x) }{x} dx } > 0.$$
Since the process $U_t^N $ almost surely comes back to a suitable compact set $ [0, c]^N , $ the assertion then follows by a Borel-Cantelli argument. The details are in \cite{do}.  
\end{proof}

\section{Well-posedness  of the limit equation}
This section is devoted to the study of existence and uniqueness of the limit equation \eqref{eq:dynlimit}. A proof of the well-posedness of the limit system \eqref{eq:dynlimit}
\begin{eqnarray*}
U_t &=& U_0 - \beta \int_0^t U_s ds + \alpha \int_0^t   \E[\varphi (U_s) R_s ]ds, \\
R_t &=& R_0 - \lambda \int_0^t R_s ds +   \int_0^t \int_0^\infty 
  1_{ \{ z \le  \varphi  ( U_{s-}) \}} \bM (ds, dz) \nonumber ,
\end{eqnarray*}  
is not immediate due to the presence of the product term $  \E[\varphi (U_s) R_s ]ds $ in the first line of the above system leading to non-Lipschitz terms. Our Assumption \ref{ass:2} has been introduced to cope with this problem. Indeed, observe that under Assumption \ref{ass:2}, the limit process $U_t$  is a deterministic process, and we shall write
$ U_t = u_t $ to highlight this fact. Notice that in this case, the spike counting process of a typical neuron in the limit population
$$ t \to \int_0^t  \int_0^\infty 1_{\{ z \le \varphi (u_s  ) \}} \bM ( dz, ds ) $$
is an inhomogeneous Poisson process of rate $ \varphi ( u_t ) $ at time $t.$ 

\begin{prop}\label{prop:uniquelimit}
Grant Assumption \ref{ass:2} and suppose only that $\varphi $ is Lipschitz, satisfying $ \varphi( x) \le C \sqrt{x} $ for all $ x \geq x_0,$ for some  fixed  $ x_0 > 0 .$ Then a path-wise unique process $ (u_t , R_t )$ exists which is solution of
\eqref{eq:dynlimit}.
\end{prop}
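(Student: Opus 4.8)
The plan is to reduce \eqref{eq:dynlimit} to the two-dimensional deterministic system \eqref{eq:dynlimitbisbis} and to establish well-posedness of the latter. Under Assumption \ref{ass:2} the initial potential $U_0 = u_0$ is deterministic, and the first line of \eqref{eq:dynlimit} expresses $t \mapsto U_t$ as the solution of a linear integral equation with deterministic forcing $t \mapsto u_0 + \alpha \int_0^t \E[\varphi(U_s) R_s]\,ds$; hence $U_t$ is deterministic and we write $U_t = u_t$. Since $r_0 < \infty$ by Assumption \ref{ass:2} one checks (using $0 \le R_t \le R_0 + \int_0^t\int_0^\infty 1_{\{z \le \varphi(u_{s-})\}}\bM(ds,dz)$ and that $u$ is continuous) that $\rho_t := \E[R_t]$ is finite and locally bounded; the thinned jump term in the second line of \eqref{eq:dynlimit} has compensator $\varphi(u_s)\,ds$, so taking expectations there shows that $(u_t,\rho_t)$ solves \eqref{eq:dynlimitbisbis} with $r_t = \rho_t$. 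Conversely, given a solution $(u_t,r_t)$ of \eqref{eq:dynlimitbisbis}, I would set $Z_t := \int_0^t\int_0^\infty 1_{\{z \le \varphi(u_{s-})\}}\bM(ds,dz)$ — a well-defined inhomogeneous Poisson process of rate $\varphi(u_t)$ since $u$ is continuous and $\varphi$ locally bounded — and define $R_t := e^{-\lambda t}R_0 + \int_0^t e^{-\lambda(t-s)}\,dZ_s$, the pathwise unique solution of the linear SDE in the second line of \eqref{eq:dynlimit}; a variation-of-constants computation gives $\E[R_t] = e^{-\lambda t} r_0 + \int_0^t e^{-\lambda(t-s)}\varphi(u_s)\,ds = r_t$, so $(u_t,R_t)$ solves \eqref{eq:dynlimit}. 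Thus existence reduces to existence for \eqref{eq:dynlimitbisbis}, together with this elementary correspondence.

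For \eqref{eq:dynlimitbisbis}, the vector field $F(u,r) = \big(-\beta u + \alpha\varphi(u) r,\ -\lambda r + \varphi(u)\big)$ is locally Lipschitz on $[0,\infty)^2$ because $\varphi$ is Lipschitz, so there is a unique maximal solution, and the signs of the drift terms keep it in the quadrant with $u_t \ge u_0 e^{-\beta t} > 0$ and $r_t \ge 0$. The one nontrivial point — and the place where the hypothesis $\varphi(x) \le C\sqrt{x}$ for $x \ge x_0$ is used — is to rule out blow-up in finite time. Writing $\varphi(x) \le M_0 + C\sqrt{x}$ for all $x \ge 0$ (with $M_0$ a bound for $\varphi$ on the compact set $[0,x_0]$), the $r$-equation gives $r_t \le r_0 + M_0 t + C\int_0^t\sqrt{u_s}\,ds$; substituting into the $u$-equation and setting $v_t := \sqrt{u_t}$ — legitimate since $u_t$ stays bounded away from $0$ on each $[0,T]$ — one obtains on $[0,T]$ an inequality of the form $v_t \le v_0 + C_T\int_0^t\big(r_0 + M_0 s + C\int_0^s v_\tau\,d\tau\big)\,ds$, a closed linear integral inequality in $v$. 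Gr\"onwall's lemma then bounds $v$, hence $u$ and $r$, on every interval $[0,T]$, so the maximal solution is global. The point is that the square-root growth is exactly the threshold at which the product term $\varphi(u)r$, where $r$ is itself controlled by a time integral of $\varphi(u)$, becomes effectively linear rather than superlinear in the unknowns.

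Finally, pathwise uniqueness for \eqref{eq:dynlimit} follows by the same reduction: if $(U_t,R_t)$ and $(\widetilde U_t,\widetilde R_t)$ both solve \eqref{eq:dynlimit} on the same space with the same $\bM$ and the same initial datum, then both $U$-components are deterministic, and taking expectations in the respective $R$-equations shows that $(U,\E R)$ and $(\widetilde U,\E\widetilde R)$ both solve \eqref{eq:dynlimitbisbis}; by the uniqueness just established, $U_t = \widetilde U_t =: u_t$ for all $t$, and then $R_t$ and $\widetilde R_t$ satisfy the same linear equation driven by the same thinned Poisson process $\int_0^{\cdot}\int_0^\infty 1_{\{z \le \varphi(u_{s-})\}}\bM(ds,dz)$, hence are indistinguishable. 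I expect the finite-time non-explosion estimate of the second paragraph to be the main obstacle; the remaining steps are routine, the sublinear growth assumption having been included precisely so that that estimate closes.
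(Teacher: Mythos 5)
Your proposal is correct and follows essentially the same route as the paper: reduce \eqref{eq:dynlimit} to the deterministic system for $(u_t,\E R_t)$ using that $U_0=u_0$ is deterministic, and rule out finite-time blow-up via the change of variables $v_t=\sqrt{u_t}$, which turns the sublinear growth $\varphi(x)\le C\sqrt{x}$ into a closed linear Gr\"onwall inequality. The only (immaterial) differences are that the paper stops the square-root substitution at $\tau=\inf\{s:u_s\le x_0\}$ rather than writing $\varphi(x)\le M_0+C\sqrt{x}$ globally, and proves uniqueness by a direct Gr\"onwall estimate on $|u_t-u_t'|+\E|R_t-R_t'|$ rather than via ODE uniqueness plus the explicit linear reconstruction of $R$ from the thinned Poisson process.
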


\begin{proof}
Writing $ r_t = \E R_t , $ the system $(u_t, r_t)$ solves, since $ U_0 = u_0 $ is deterministic,
$$ d u_t = - \beta u_t dt + \alpha \varphi (u_t) r_t dt , d r_t = - \lambda r_t + \varphi (u_t) dt. $$ 
We first show that any solution is non-exploding. For that sake, suppose w.l.o.g. that $u_0 > x_0 .$ 
Since $ \varphi (x) \le C \sqrt{x} $ for all $ x \geq x_0, $ we use the change of variables $ g_t := \sqrt{ u_t} $ for all $ t \le \tau := \inf \{ s : u_s \le x_0 \} .$ Then, for all $ t \le \tau, $  
$$ d g_t \le - \frac{\beta}{2} g_t dt + C \frac{\alpha}{2} r_t dt , d r_t \le  - \lambda r_t + C g_t dt .$$ 
The rhs of the above inequality defines a linear equation which can be solved explicitly implying the existence of a non-exploding solution. 

To prove uniqueness of the solution, suppose that $ (u_t', R_t' ) $ is another solution, starting from the same initial conditions. Due to the first part of the proof, we know that for any 
$ T> 0, $ the functions $ r_t = \E R_t , $ $r_t'  = \E R_t' , $ $ \varphi (u_t ) $ and $ \varphi (u_t' ) $ are bounded on $ [0, T ] ,$ say by a constant $K_T . $ Then, by the Lipschitz continuity of $ \varphi $ with Lipschitz constant $L_\varphi, $   
$$| u_t - u'_t | \le \beta \int_0^t | u_s - u'_s | ds + \alpha L_\varphi  K_T \int_0^t | u_s - u'_s |ds  + \alpha K_T  \int_0^t \E  | R_s- R'_s| ds  $$
and 
$$ \E | R_t - R'_t| \le \lambda \int_0^t \E | R_s - R'_s| ds + L_\varphi \int_0^t  | u_s - u'_s | ds ,$$
implying that
$$  \E | R_t - R'_t| +| u_t - u'_t | \le C_T \int_0^t \left[  \E | R_s - R'_s| +| u_s - u'_s | \right] ds ,$$
and thus $ R_t = R'_t $ almost surely and $u_t = u'_t $ for all $t \le T,$ whence the uniqueness of the solution. 
\end{proof}

\section{Convergence of the particle system to the limit equation}\label{sec:convergence}
We now show that under Assumptions \ref{ass:1} and \ref{ass:2} the finite system \eqref{eq:dyn} converges to the limit equation \eqref{eq:dynlimit} in a certain sense. Recall that 
we suppose that $ (U^N_0 (i ) , R^N_0 (i )  )_{1 \le i \le N}  $ are i.i.d.\ $ \eta_0-$distributed random variables.

\subsection{A priori bounds}
In the sequel we shall use  a priori upper bounds on the processes of residual calcium concentrations. Recall that in this part of the paper we work under the assumption that $ \varphi $ is bounded and that $K=  \| \varphi \|_\infty  .$ 
We introduce
$$J_t (i )  :=\int_0^t \int_0^\infty 1_{ \{ z \le K \}} \bM^i (ds, dz)   , 1 \le i \le N.$$ 
Then $J_t (i)  , i \geq 1, $ are i.i.d. standard Poisson processes of rate $K$ each, and each $R_t^{N } (i )  $  
is stochastically dominated by 
\begin{equation}\label{eq:barrt}
R_t^N (i ) \le \check R^N_t (i)  :=   e^{- \lambda t} R_0^{N} (i )  + e^{- \lambda t} \int_0^t e^{\lambda s} d J_s (i )  .
\end{equation}
For each $ 1 \le i \le N, $ the process $ \check R^N (i )  $ is a Markov process with generator 
$$ \check A g ( r) = - \lambda r g' ( r) + K   ( g(r+1)- g(r) ) ,$$
for sufficiently smooth test functions $g.$ Moreover, the processes $ \check R^N (i ), 1 \le i \le N, $ are i.i.d. Taking a Lyapunov function $ V(x) = |x|, $ $ V(x) = x^2 $ or $ V(x) = e^{ux}, $ respectively, it is easy to see that for any
such choice of $V$ there exist suitable positive constants $c, d , r_0 ,$ such that  
$$  \check A V (r)  \le  c 1_{\{ r \le  r_0\}  }  - d  V(r)  .$$
A standard Lyapunov argument then implies that for all $ t \geq 0, $ 
$$ \E_r V ( \check R^N_t (i ) ) \le e^{ - d t } r + \frac{c}{d} ,$$ and hence
\begin{equation}\label{eq:tobeusedlater}
\sup_{ t \geq  0 } \E V(R^N_t (i) ) \le \sup_{ t \geq  0 } \E V(\check R^N_t (i) ) < \infty, 
\end{equation}
provided $\E (V( R^N_0 (i )  )   ) < \infty .$

In particular, under our assumptions, 
\begin{equation}\label{eq:boundrt}
 \sup_{ t \geq 0  } \E ( R_t^{N } (i )  ) \le \kappa_R  :=  \E ( R_0^{N  } (1)  )   + \frac{K}{\lambda}  ,
\end{equation}
implying that 
\begin{equation}\label{eq:boundut}
\sup_{t  \geq 0  } \E ( U_t^{N } (i )  ) \le  \kappa_U := \E ( U_0^{N } (1)  ) +   \frac{\alpha K}{\beta} \kappa_R .
\end{equation}

\subsection{Tightness in Skorokhod space}
We consider a probability distribution $\eta_0$
on $\R^2_+$ such that $ \int_{\R_+^2 } (u + r ) \eta_0 (du, dr ) < \infty, $ and for each $N\geq 1$,
the unique solution $(U^N_t, R^N_t)_{t\geq 0}$ to \eqref{eq:dyn} starting from
some i.i.d. $\eta_0$-distributed initial conditions $(U^{N}_0 (i ) , R^{N }_0 (i ) )$. We want to show that the sequence of processes $(U^{N}_t (i ) , R^{N }_t (i ) )_{t\geq 0}$ is tight in $D(\R_+ , \R_+^2),$
for any $ i \geq 1.$ Here, the set $\D(\R_+, \R_+^2 )$ of c\`adl\`ag functions on $\R_+$ taking values in $\R_+^2 $ 
is endowed with the topology of the Skorokhod convergence on compact time intervals,
see  \cite{js}.

\begin{prop}\label{prop:6}
Grant Assumption  \ref{ass:1} and let $(U^{N}_0 (i ) , R^{N }_0 (i ) )_{1 \le i \le N} $ be i.i.d.\ satisfying that  $ \E ( U^{N}_0 (i ) ) < \infty $ and $ \E( R^{N }_0 (i ) ) < \infty . $ \\
(i) The sequence of processes $(U^{N}_t (1) , R^{N }_t (1) )_{t\geq 0}$ is tight in $D(\R_+ , \R_+^2)$.
\\
(ii) The sequence of empirical measures $\hat \mu_N=N^{-1}\sum_{i=1}^N \delta_{(U^{N}_t (i ), R^{N}_t (i ) )_{t\geq 0}}$
is tight in ${\mathcal P} (\D(\R_+, \R_+^2 ))$.
\end{prop}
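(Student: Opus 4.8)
The plan is to deduce (i) from a compact containment condition together with Aldous's criterion, and then to obtain (ii) from (i) by exploiting the exchangeability of the system. For the compact containment I would use the pathwise domination \eqref{eq:barrt}: since $e^{-\lambda(t-s)} \le 1$ for $s \le t$, one gets $\sup_{t \le T} \check R^N_t(i) \le R^N_0(i) + J_T(i)$, a quantity whose expectation equals $\int r\,\eta_0(du,dr) + KT$ and is therefore bounded uniformly in $N$. Since both components of \eqref{eq:dyn} increase only through their jump terms (the drifts being non-positive), the estimates behind \eqref{eq:boundrt}--\eqref{eq:boundut} upgrade to
$$
\sup_{N \ge 1} \E\Big[ \sup_{t \le T}\big( U^N_t(1) + R^N_t(1) \big) \Big] \le C_T < \infty .
$$
By Markov's inequality this furnishes, for every $\eta > 0$, a compact set $[0,A]^2 \subset \R_+^2$ with $\inf_N \P\big( (U^N_t(1), R^N_t(1)) \in [0,A]^2 \text{ for all } t \le T \big) \ge 1 - \eta$, which is the desired compact containment.

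Next I would verify Aldous's condition through $L^1$-bounds on increments. Let $\tau \le \tau'$ be stopping times bounded by $T$ with $\tau' - \tau \le \delta$. The second line of \eqref{eq:dyn} gives $|R^N_{\tau'}(1) - R^N_\tau(1)| \le \lambda \int_\tau^{\tau'} R^N_s(1)\,ds + \int_\tau^{\tau'}\int_0^\infty 1_{\{z \le \varphi(U^N_{s-}(1))\}}\bM^1(ds,dz)$, so that, using $\|\varphi\|_\infty = K$, the domination $R^N_s(1) \le \check R^N_s(1)$ and $\tau'-\tau \le \delta$,
$$
\E\big| R^N_{\tau'}(1) - R^N_\tau(1) \big| \le \lambda \delta\, \E\Big[\sup_{s \le T} \check R^N_s(1)\Big] + K\delta \le C_T \delta .
$$
For the membrane potential the interaction term contributes $\frac{\alpha}{N}\sum_{j=1}^N \int_\tau^{\tau'}\int_0^\infty R^N_{s-}(j) 1_{\{z \le \varphi(U^N_{s-}(j))\}}\bM^j(ds,dz)$, whose expectation is at most $\alpha K \delta \cdot \frac1N \sum_{j=1}^N \E[\sup_{s \le T}\check R^N_s(j)] \le C_T\delta$ by the uniform bound above, while the drift term is controlled exactly as for $R$; hence $\E| U^N_{\tau'}(1) - U^N_\tau(1)| \le C_T\delta$ as well. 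Markov's inequality then gives $\P\big( |U^N_{\tau'}(1) - U^N_\tau(1)| + |R^N_{\tau'}(1) - R^N_\tau(1)| \ge \varepsilon \big) \le C_T\delta/\varepsilon$, uniformly in $N$ and in the choice of stopping times. Combined with the compact containment, this proves (i) (see \cite{js} for the relevant tightness criterion in Skorokhod space).

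For (ii), observe that since the initial data $(U^N_0(i), R^N_0(i))_{1 \le i \le N}$ are i.i.d.\ and the driving Poisson measures $\bM^i$ are i.i.d., the coordinates of the solution $(U^N_\cdot(i), R^N_\cdot(i))_{1 \le i \le N}$ of \eqref{eq:dyn} form, for each $N$, an exchangeable family. By the classical equivalence for exchangeable systems, the laws of the empirical measures $\hat\mu_N$ are tight in $\mathcal{P}(D(\R_+, \R_+^2))$ if and only if their intensity measures are tight in $\mathcal{P}(D(\R_+,\R_+^2))$; and by exchangeability the intensity measure of $\hat\mu_N$ coincides with the law of $(U^N_\cdot(1), R^N_\cdot(1))$, whose tightness was established in (i).

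The computations are elementary once the a priori control of the calcium processes is in hand; the only point that requires care — and which I expect to be the main obstacle — is that the mean-field interaction term in the $U$-equation is a superposition of order $N$ jumps of size of order $1/N$, so that uniformity in $N$ of the increment estimate rests entirely on \eqref{eq:boundrt}, namely on the fact that the average $\frac1N\sum_{j} \E R^N_s(j)$ stays bounded uniformly in $s$ and $N$, which converts the $\frac1N\sum_j$ into a harmless mean.
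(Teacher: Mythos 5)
Your proposal is correct and follows essentially the same route as the paper: Aldous's tightness criterion for (i), with compact containment supplied by the pathwise domination of the calcium processes and the uniform-in-$N$ first-moment bounds \eqref{eq:boundrt}--\eqref{eq:boundut}, plus $L^1$ increment estimates at stopping times, and then Sznitman's exchangeability equivalence to deduce (ii) from (i). Your use of $\E[\sup_{s\le T}\check R^N_s(1)]$ rather than $\sup_s \E[\check R^N_s(1)]$ when bounding the drift integral between the random times $\tau,\tau'$ is, if anything, slightly more careful than the paper's wording, but it is the same argument.
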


\begin{proof}
Point (ii) follows from point (i) and the exchangeability 
of the system, see  \cite[Proposition 2.2-(ii)]{s}. We thus
only prove (i). 
To show that the family $((U^{N}_t (1) , R^{N }_t (1) ))_{t\geq 0})_{N\geq 1}$ is tight in  $\D(\R_+, \R_+^2)$, 
we use the criterion of Aldous, see  \cite[Theorem 4.5 page 356]{js}. It is sufficient to prove that

(a) for all $ T> 0$, all $\varepsilon >0$,
$ \lim_{ \delta \downarrow 0} \limsup_{N \to \infty } \sup_{ (S,S') \in A_{\delta,T}} 
\P ( |U_{S'}^{N } (1)  - U_S^{N }(1)  | + |R_{S'}^{N } (1)  - R_S^{N  } (1)  | > \varepsilon ) = 0$,
where $A_{\delta,T}$ is the set of all pairs of stopping times $(S,S')$ such that
$0\leq S \leq S'\leq S+\delta\leq T$ a.s.,

(b) for all $ T> 0$, $\lim_{ L \uparrow \infty } \sup_N 
\P ( \sup_{ t \in [0, T ] } ( |U_t^{N } (1) |  + | R_t^{N} (1)  | ) \geq L ) = 0$.

To check (a), consider $(S,S')\in A_{\delta,T}$ and write
\begin{multline*}
\E |U_{S'}^{N } (1)  - U_S^{N } (1) | \le \beta  \int_S^{S'} \E |U_s^{N } (1) | ds +
   \frac{\alpha }{ {N} } \sum_{j=1}^N \int_S^{S'} \E [\varphi ( U_{s }^{N } (j) ) R_s^{N } (j) ]  ds   \\
\le \beta \int_S^{S'} \E |U_s^{N } (1) | ds + \alpha K  \kappa_R \delta 
\le \beta \kappa_U  \delta + \alpha K  \kappa_R  \delta  \to 0
\end{multline*}
as $ \delta \to 0.$ The expression $\E |R_{S'}^{N } (1)  - R_S^{N } (1) |$ is treated analogously. Moreover, (b) immediately follows from the a priori bounds \eqref{eq:boundrt} and \eqref{eq:boundut}, since $\kappa_R $ and $\kappa_U $ do not depend on $N.$ 
\end{proof}

At this point, one usually concludes the proof that the sequence of processes $(U_t^{N } (i), R^{N }_t(i) )_{t\geq 0}$ converges weakly to $ (U_t, R_t)_{ t \geq 0} $ in  $D(\R_+ , \R_+^2)$ by showing that any 
possible limit point of the sequence is necessarily solution of the limit equation. Classically, this is shown by proving that any limit law must be solution of the associated martingale problem. Uniqueness of the 
solution of the martingale problem then implies the desired convergence.  

In our specific situation however, we are able to identify any possible limit thanks to a coupling argument that we shall present in the next subsection. This coupling argument has another advantage. It enables us to give
a precise rate of convergence.

\subsection{A coupling approach}
We propose a coupling approach, which is inspired by the ideas presented in Sznitman (1991) \cite{s}. The non-Lipschitz term $  \E[\varphi (U_s) R_s ]ds $ appearing in the limit system \eqref{eq:dynlimit} 
demands however to adapt this approach to the present situation. 
Throughout this section we work under Assumption \ref{ass:2} implying that $U^{N }_0 (i)  = u_0 $ for all $ N $ and $1 \le i \le N.$ As a consequence, coming back to \eqref{eq:dyn}, we see that 
$$ U_t^{N } (i)  = U_t^{N } (1)  =: U_t^N $$ 
for all $ 1 \le i \le N, $ that is, the membrane potential processes of all neurons within the system are all equal, and only the values of the calcium concentrations differ. We can therefore rephrase \eqref{eq:dyn} as 
\begin{eqnarray}\label{eq:dynbis}
U^{N}_t &= & u_0  - \beta \int_0^t  U^{N}_s   ds + \frac{\alpha}{N}\sum_{ j =1 }^N \int_0^t  \int_0^\infty  R^{N}_{s-} (j)  1_{ \{ z \le  \varphi ( U^{N}_{s-}) \}} \bM^j (ds,  dz) ,   \\
R^{N }_t (i)  &=& R^{N }_0 (i)  - \lambda \int_0^t  R^{N}_s (i)    ds  +   \int_0^t \int_0^\infty 
  1_{ \{ z \le  \varphi  ( U^{N}_{s-}) \}} \bM^i (ds, dz) , 1 \le i \le N . 
\nonumber
\end{eqnarray}   
In order to control the speed of convergence to the limit system, we now first replace  
 \eqref{eq:dynbis} by an approximating system which is given as follows. 
\begin{eqnarray}\label{eq:dynapprox}
\tilde U^{N}_t &= &u_0   - \beta \int_0^t  \tilde U^{N}_s   ds +  \alpha \int_0^t  \varphi( \tilde U_s^{N } )  \left(  \frac{1}{N} \sum_{j=1}^N \tilde R_s^{N } (j) \right) ds ,   \\
\tilde R^{N }_t (i)  &=& R^{N }_0 (i)  - \lambda \int_0^t  \tilde R^{N}_s (i)    ds  +   \int_0^t \int_0^\infty 
  1_{ \{ z \le  \varphi  ( \tilde U^{N}_{s-}) \}} \bM^i (ds, dz) ,
\nonumber
\end{eqnarray}  
where $ \bM^i (ds, dz) $ is the Poisson random measure driving the dynamics of $  R^{N } (i) .$  

Our aim is to show that \eqref{eq:dynapprox} is close to the original system \eqref{eq:dynbis}. To do so, we
introduce the distance

\begin{equation}
\Delta_t^{N} = \E \left( \left[ | \tilde U_t^{N } - U_t^{N} | + |  \tilde R_t^{N } (1) - R_t^{N } (1)  | \right]  \right) .
\end{equation}

\begin{theo}\label{theo:coupling}
Grant Assumptions \ref{ass:1} and \ref{ass:2}. Fix $T > 1 .$ Then for all $ t \le T ,$
\begin{equation}
\Delta_t^{N}  \le C_T N^{-1/2} .
\end{equation}
\end{theo}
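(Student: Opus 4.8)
The plan is to control $\Delta_t^N$ by a Gronwall argument, after isolating the genuinely random fluctuation term. First I would write down the difference of the two equations. For the calcium coordinate, subtracting the second line of \eqref{eq:dynapprox} from that of \eqref{eq:dynbis} gives
\begin{equation*}
\tilde R_t^N(1) - R_t^N(1) = -\lambda\int_0^t\bigl(\tilde R_s^N(1)-R_s^N(1)\bigr)ds + \int_0^t\int_0^\infty\bigl(1_{\{z\le\varphi(\tilde U_{s-}^N)\}}-1_{\{z\le\varphi(U_{s-}^N)\}}\bigr)\bM^1(ds,dz),
\end{equation*}
and taking expectations of the absolute value, using that the Poisson measure has intensity $ds\,dz$ and the Lipschitz property of $\varphi$, one bounds the jump contribution by $L_\varphi\int_0^t\E|\tilde U_s^N - U_s^N|ds$, so $\E|\tilde R_t^N(1)-R_t^N(1)| \le \lambda\int_0^t\E|\tilde R_s^N(1)-R_s^N(1)|ds + L_\varphi\int_0^t\E|\tilde U_s^N-U_s^N|ds$. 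For the potential coordinate the key algebraic step is to rewrite the interaction term of \eqref{eq:dynbis} as
\begin{equation*}
\frac{\alpha}{N}\sum_{j=1}^N\int_0^t\int_0^\infty R_{s-}^N(j)1_{\{z\le\varphi(U_{s-}^N)\}}\bM^j(ds,dz) = \alpha\int_0^t\varphi(U_s^N)\Bigl(\frac1N\sum_{j=1}^N R_s^N(j)\Bigr)ds + M_t^N,
\end{equation*}
where $M_t^N := \frac{\alpha}{N}\sum_{j=1}^N\int_0^t\int_0^\infty R_{s-}^N(j)1_{\{z\le\varphi(U_{s-}^N)\}}\bigl(\bM^j(ds,dz)-ds\,dz\bigr)$ is a martingale. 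Then $\tilde U_t^N - U_t^N$ equals the compensated drift difference minus $M_t^N$, and the drift difference splits, by adding and subtracting $\alpha\int_0^t\varphi(\tilde U_s^N)(\frac1N\sum_j R_s^N(j))ds$, into a term bounded by $(\beta + \alpha L_\varphi\kappa_R)\int_0^t\E|\tilde U_s^N-U_s^N|ds$ (using \eqref{eq:boundrt} to bound the empirical mean of the $R^N(j)$ in expectation) plus $\alpha\|\varphi\|_\infty\int_0^t\frac1N\sum_j\E|\tilde R_s^N(j)-R_s^N(j)|ds$; by exchangeability the latter equals $\alpha K\int_0^t\E|\tilde R_s^N(1)-R_s^N(1)|ds$.

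The second step is to estimate $\sup_{t\le T}\E|M_t^N|$. This is the crucial fluctuation estimate. I would use Doob's inequality together with the $L^2$ isometry for the compensated Poisson integrals: since the $\bM^j$ are independent,
\begin{equation*}
\E\bigl(\sup_{t\le T}(M_t^N)^2\bigr) \le 4\E\bigl((M_T^N)^2\bigr) = \frac{4\alpha^2}{N^2}\sum_{j=1}^N\E\int_0^T\int_0^\infty (R_{s-}^N(j))^2 1_{\{z\le\varphi(U_{s-}^N)\}}\,ds\,dz \le \frac{4\alpha^2 K}{N}\sup_{s\le T}\E\bigl((R_s^N(1))^2\bigr),
\end{equation*}
and the last supremum is finite and $N$-independent by \eqref{eq:tobeusedlater} applied with $V(x)=x^2$ (here Assumption \ref{ass:2}, which gives $\int r^2 g_0(dr)<\infty$, is exactly what is needed). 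Hence $\sup_{t\le T}\E|M_t^N| \le C_T N^{-1/2}$.

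The final step is to combine everything. Setting $\phi_t := \E|\tilde U_t^N-U_t^N| + \E|\tilde R_t^N(1)-R_t^N(1)| = \Delta_t^N$ (note both systems start from the same data, so $\phi_0=0$), the estimates above yield $\phi_t \le C_T N^{-1/2} + C\int_0^t\phi_s\,ds$ for all $t\le T$, where $C$ depends only on $\beta,\lambda,\alpha,L_\varphi,K,\kappa_R$. Gronwall's lemma then gives $\Delta_t^N = \phi_t \le C_T N^{-1/2}e^{CT} =: C_T N^{-1/2}$, which is the claim. I expect the main obstacle to be step two, the martingale fluctuation bound: one must be careful that the factor $N^{-1}$ from the interaction, squared and summed over $j$, produces exactly one factor $N^{-1}$, and that the uniform-in-time second-moment bound on $R_s^N(1)$ genuinely follows from the Lyapunov estimate \eqref{eq:tobeusedlater} rather than requiring new work — everything else is a routine Gronwall argument once the decomposition into drift plus martingale is written cleanly.
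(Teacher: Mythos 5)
Your overall architecture (drift-plus-martingale decomposition, $L^2$ bound on the fluctuation term giving $N^{-1/2}$, then Gronwall in $L^1$) is the same as the paper's, and your martingale estimate via Doob's inequality and the Poisson isometry is fine. But there is a genuine gap at the step you pass over most quickly: the bound of the drift-difference term by $(\beta+\alpha L_\varphi \kappa_R)\int_0^t \E|\tilde U_s^N - U_s^N|\,ds$. After your add-and-subtract, the relevant term is $\alpha L_\varphi \int_0^t \E\bigl[\,|U_s^N-\tilde U_s^N|\cdot \tfrac1N\sum_j R_s^N(j)\bigr]ds$, i.e.\ the expectation of a \emph{product} of two dependent random variables (both are driven by the same Poisson measures). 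You cannot factor this as $\E|U_s^N-\tilde U_s^N|\cdot\E[\tfrac1N\sum_j R_s^N(j)]$ and then invoke \eqref{eq:boundrt}; and Cauchy--Schwarz would change the norm and fail to close the $L^1$ Gronwall loop. This is precisely the non-Lipschitz difficulty the paper flags as the reason the standard Sznitman coupling must be adapted.

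The paper's fix is a truncation: fix a deterministic level $a \geq \sup_s \E \check R_s^N(1)+1$ and split the product over the events $\{\bar R_s^N \le a\}$ and $\{\bar R_s^N > a\}$. On the first event the product is bounded by $a L_\varphi |U_s^N-\tilde U_s^N|$, which feeds into Gronwall with a deterministic constant. On the second event one uses the boundedness of $\varphi$ (so the $|U-\tilde U|$ factor is dropped entirely in favour of $\|\varphi\|_\infty$), then H\"older plus Chebyshev applied to the i.i.d.\ dominating processes $\check R^N(i)$ of \eqref{eq:barrt}, yielding $\E[\bar R_s^N 1_{\{\bar R_s^N \geq a\}}] \le C N^{-1/2}$, which is absorbed into the $N^{-1/2}$ error. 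You need to add this truncation argument (and the second-moment bound \eqref{eq:tobeusedlater} with $V(x)=x^2$ that powers the Chebyshev step) for the Gronwall inequality $\phi_t \le C_T N^{-1/2} + C\int_0^t\phi_s\,ds$ to be legitimate; everything else in your plan then goes through as written.
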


\begin{proof}
We work on the fixed time interval $ [0, T ] $ and we suppose w.l.o.g. that $ K T \geq 1 .$  
Rewrite first the equation of $U_t^{N} $ in the following way. For $ \tilde \bM^j (ds, dz ) := \bM (ds, dz ) - ds dz ,$ we have 
\begin{eqnarray*}
U^{N}_t &=&  u_0  - \beta \int_0^t  U^{N}_s   ds + \frac{\alpha}{N}\sum_{ j=1  }^N \int_0^t  \int_0^\infty  R^{N }_{s-} (j)  1_{ \{ z \le  \varphi ( U^{N}_{s-}) \}} \tilde \bM^j (ds,  dz) 
\\
&& \quad \quad \quad + \alpha \int_0^t  \varphi ( U_s^{N} ) (\frac{1}{N}  \sum_{j=1  }^N  R_s^{N } (j)  ) ds \\
&=:& u_0  - \beta \int_0^t  U^{N}_s   ds  + \alpha \int_0^t  \varphi ( U_s^{N} )  (\frac{1}{N}  \sum_{j =1 }^N R_s^{N } (j)  )ds  +  M_t^{N, 1}  ,
\end{eqnarray*}
where 
\begin{equation}\label{eq:m1}
M_t^{N, 1 }  :=   \frac{\alpha}{N}\sum_{ j =1}^N \int_0^t  \int_0^\infty  R^{N }_{s-} (j)  1_{ \{ z \le  \varphi ( U^{N}_{s-}) \}} \tilde \bM^j (ds,  dz) .
\end{equation}

Therefore, writing 
\begin{equation}\label{eq:barrn}
\bar R_t^N :=  \frac{1}{N} \sum_{j =1 }^N \tilde  R_t^{N } (j)  ,
\end{equation}
we have for a constant $C$ that might change from one occurrence to another
\begin{eqnarray}\label{eq:tobeusedlaterone}
 | \tilde U_t^{N  } - U_t^{N } | 
    &\le & \beta \int_0^t | \tilde U_s^{N } - U_s^{N} |  ds 
  +  \alpha \int_0^t \frac{1}{N} \sum_{j  } | \varphi ( U_s^{N} ) R_s^{N} (j)    - \varphi ( \tilde U_s^{N} ) \tilde R_s^{N } (j)  |ds 
  +  |M_t^{N, 1 } |  \nonumber \\
& \le& \beta \int_0^t | \tilde U_s^{N  } - U_s^{N } |  ds   +  \alpha \int_0^t |\varphi ( U_s^{N} )    - \varphi ( \tilde U_s^{N} )|  \bar R_s^{N }  ds \nonumber \\
&& +   C   \int_0^t \frac{1}{N} \sum_j  |R_s^{N } (j)  - \tilde R_s^{N}  (j) | ds +  |M_t^{N  , 1} | ,
\end{eqnarray}  
where we have used the boundedness of $ \varphi  .$ 

In the above expression, we have to control the term $ \alpha \int_0^t |\varphi ( U_s^{N} )    - \varphi ( \tilde U_s^{N} )|  \bar R_s^{N }  ds .$ For that sake, fix a constant  $a  \geq  \sup_{ s  \geq 0  } \E \check  R_s^N (1)  + 1.  $ Notice that by \eqref{eq:tobeusedlater}, applied with $ V(x) = x^2,$  we can choose $a$ such that it does not depend on $T. $ Then, by the Lipschitz continuity of $ \varphi, $ 
\begin{multline*}
 \alpha \int_0^t |\varphi ( U_s^{N} )    - \varphi ( \tilde U_s^{N} )|  \bar R_s^{N }  ds \\
\le  a  \alpha L_\varphi \int_0^t | U_s^{N}- \tilde U_s^{N} |  ds +  \alpha \int_0^t |\varphi ( U_s^{N} )    - \varphi ( \tilde U_s^{N} )|  \bar R_s^{N }  1_{\{    \bar R_s^N \geq a \}} ds .
\end{multline*}
Since $ \varphi $ is bounded, the last expression in the above term is bounded by 
$$ \alpha \| \varphi\|_\infty \int_0^t  \bar R_s^{N }  1_{\{    \bar R_s^N \geq a \}} ds  .$$
We use H\"older's inequality to obtain 
$$  \E (  \bar R_s^{N }  1_{\{ \bar R_s^N \geq a \}})  \le \left(\E [ (\bar R_s^{N })^2] \right)^{1/2} \left(\P (    \bar R_s^N \geq a )\right)^{1/2} .$$  
\eqref{eq:tobeusedlater} applied with $ V(x) = x^2 $ and Chebyshev's inequality yield, by independence of  the processes $\check R^N(i), 1 \le i \le N, $ 
$$ \P ( \bar R_s^N > a ) \le \P (\sum_{i=1}^N  \check R_s^N (i) > N a ) \le   \P (  \sum_{i=1}^N  \check R_s^N (i) -  \E \check R_s^N (i)  >   N  ) \le \frac1N  Var (\check R_s^N (1 ) ) \le \frac{C }{ N} ,$$
since $ a  \geq   \E \check R_s^N (i ) + 1 .$ 
Therefore
$$\alpha \| \varphi \|_\infty \int_0^t   \E [ \bar R_s^{N }  1_{\{    \bar R_s^N \geq a \}}] ds   \le  C t N^{-1/2}. $$ 
We conclude that for all $t \le T,$ 
\begin{multline}\label{eq:upperboundu}
\E \left(  | \tilde U_t^{N  } - U_t^{N } |  \right)  \le \beta \int_0^t \E | \tilde U_s^{N  } - U_s^{N} |   ds     +   a \alpha L_\varphi    \int_0^t  \E  |  U_s^{N}    -   \tilde U_s^{N} | ds \\
+ C t N^{-/1/2} +  C   \int_0^t  \E  | R_s^{N } (1)  - \tilde R_s^{N } (1)  |  ds+  \E | M_t^{N, 1} |\\
 \le  C  \int_0^t \Delta_s^{N } ds  +  \frac{C T}{ \sqrt{N} }  + \E | M_t^{N, 1 } |,
\end{multline}
where we have used the exchangeability of the system.

To deal with the martingale part, write
$$M_t^{N, 1 } = \frac{\alpha}{N}  \sum_{j=1 }^N M_t^{N,1, j} , \;\mbox{ for } \;    M_t^{N, 1,j} = \int_0^t  \int_0^\infty  R^{N }_{s-} (j) 1_{ \{ z \le  \varphi ( U^{N}_{s-}) \}} \tilde \bM^j (ds,  dz) .$$
Notice that $M_t^{N,1, j } $ and $M_t^{N, 1, k } $ almost surely never jump together for $ j \neq k , $ hence, using once more \eqref{eq:tobeusedlater} with $ V(x) = x^2,$ 
$$
 \E (M_t^{N,1})^2 = \frac{ \alpha^2}{N^2} \sum_{j = 1 }^N \E ( M_t^{N,1,  j })^2 \le C N^{-1} \int_0^t \E \left( (R_s^{N} (1) )^2 \varphi ( U_s^{N} ) \right) ds 
\le  C  \frac{t}{N}   .  
$$
As a consequence, for all $t \le T,$ 
$$  \E | M_t^{N, 1} | \le  C T^{1/2}   N^{- 1/2 } .$$ 
Finally, using once again the Lipschitz continuity of $ \varphi, $ 
$$
\E \left( | \tilde R_t^{N } (1) - R_t^{N } (1)  | \right)  \le \lambda  \int_0^t \E |  \tilde R_s^{N } (1) - R_s^{N }(1)  |  ds + C   \int_0^t \E  | \tilde U_s^{N  } - U_s^{N } |  ds ,
$$
whence
$$ \Delta_t^{N}  \le C ( \alpha ,  K, g_0  )  T      N^{-1/2}   + C   \int_0^t \Delta_s^{N, a} ds  ,$$
implying that 
$$ \Delta_t^{N}  \le   \; C T  N^{-1/2} e^{Ct} $$
for all $ t \le T,$ which concludes the proof.
\end{proof}
 
We are now going to control the distance between the approximating system \eqref{eq:dynapprox} and the limit system \eqref{eq:dynlimit}. Recall that 
$$ \bar R_t^N:= \frac1N \sum_{i=1}^N \tilde R_t^{N } (i)  .$$
Then
 $$ d \bar R_t^N = - \beta \bar R_t^N dt + \int_0^\infty 
  1_{ \{ z \le  \varphi  ( \tilde U^{N}_{t-}) \}} \overline \bM^N ( dt, dz ) ,\;  \mbox{ where  } \, \;  
 \overline \bM^N = \frac1N \sum_{i=1}^N \bM^i .$$
Compensating each Poisson random measure, this yields
$$ d \bar R_t^N = - \beta \bar R_t^N dt  + \varphi ( \tilde U^N_t ) dt +  d M_t^{N , 2} , $$
where
\begin{equation}\label{eq:m2}
M_t^{N, 2}  = \frac{1}{{N}} \sum_{i=1}^N  \int_0^t \int_0^\infty 
  1_{ \{ z \le  \varphi  ( \tilde U^{N}_{s-}) \}} \tilde \bM^i (ds, dz) 
\end{equation}  
is a square integrable martingale. We can thus rewrite \eqref{eq:dynapprox} as 
\begin{eqnarray}\label{eq:dynapproxter}
\tilde U^{N}_t &= & u_0   - \beta \int_0^t  \tilde U^{N}_s   ds +  \alpha \int_0^t  \varphi( \tilde U_s^{N } ) \bar R_s^N ds ,   \\
\bar R_t^N &=& \bar R^{N }_0 - \lambda \int_0^t \bar R_s^N  ds  +   \int_0^t \varphi ( \tilde U^N_s) ds +    M_t^{N, 2 }.
\end{eqnarray}
Moreover, since $ U_0 = u_0  $ is deterministic, writing $ r_t := \E R_t $ and recalling that $u_t$ is deterministic,  the dynamics of the limit equation boils down to 
\begin{equation}\label{eq:system}
\left\{ 
\begin{array}{lcl}
d u_t &=&  - \beta u_t dt +  \alpha \varphi ( u_t) r_t dt\\
d r_t &=& - \lambda r_t dt + \varphi (u_t ) dt  
\end{array}
\right\} .  
\end{equation}
We obtain 

\begin{theo}\label{theo:couplingbis}
Grant the assumptions of Theorem \ref{theo:coupling}. Fix $T > 1 .$ Then for all $ t \le T $
$$ \E \left[  | u_t - \tilde U^{N}_t | +  | r_t - \bar R_t^N| \right] \le C_T  N^{-1/2} $$
and consequently for the original particle system, 
$$ \E [  | u_t -  U^{N}_t | ] +  \E \left[ \Big | \frac1N \sum_{j=1}^N   R_t^{N } (j)   - r_t \Big| \right] \le C_T  N^{-1/2} .$$
\end{theo}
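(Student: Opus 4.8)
The plan is to compare the reformulated approximating system \eqref{eq:dynapproxter} directly with the reduced limit system \eqref{eq:system}, running a one--dimensional Grönwall argument on
\[ \Theta_t^N := \E\, | u_t - \tilde U_t^N | + \E\, | r_t - \bar R_t^N | . \]
First I would subtract the two potential equations: the initial conditions cancel since both equal $u_0$ under Assumption~\ref{ass:2}, and the only delicate term is $\alpha \int_0^t ( \varphi(u_s) r_s - \varphi(\tilde U_s^N) \bar R_s^N )\, ds$, which I split as $(\varphi(u_s) - \varphi(\tilde U_s^N)) r_s + \varphi(\tilde U_s^N)(r_s - \bar R_s^N)$. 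The point that makes this step genuinely easier than in Theorem~\ref{theo:coupling} is that $r_s$ solves the scalar ODE $dr_t = -\lambda r_t\,dt + \varphi(u_t)\,dt$ with $\varphi \le K$, hence $r_s \le r_0 + K/\lambda$ \emph{deterministically}; so the first piece is bounded by $L_\varphi(r_0 + K/\lambda)\,|u_s - \tilde U_s^N|$, the second by $K\,|r_s - \bar R_s^N|$, and no truncation of $\bar R_s^N$ is required. This gives $\E\,|u_t - \tilde U_t^N| \le C \int_0^t \big( \E\,|u_s - \tilde U_s^N| + \E\,|r_s - \bar R_s^N| \big)\, ds$.

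Next I would subtract the two calcium equations, obtaining $| r_t - \bar R_t^N | \le | r_0 - \bar R_0^N | + \lambda \int_0^t | r_s - \bar R_s^N |\, ds + L_\varphi \int_0^t | u_s - \tilde U_s^N |\, ds + | M_t^{N,2} |$. Two $N$-dependent terms must be controlled. For the initial condition, $\E\,|r_0 - \bar R_0^N| \le \big( \tfrac1N \var(R_0^N(1)) \big)^{1/2} \le C N^{-1/2}$, which is exactly where the second-moment hypothesis $\int_0^\infty r^2 g_0(dr) < \infty$ of Assumption~\ref{ass:2} enters; note this bound is uniform in $N$. For the martingale $M_t^{N,2}$ of \eqref{eq:m2}, the $N$ constituent compensated Poisson integrals never jump simultaneously, so $\E (M_t^{N,2})^2 = N^{-2} \sum_{i=1}^N \E \int_0^t \varphi(\tilde U_s^N)\, ds \le Kt/N$, hence $\E\,|M_t^{N,2}| \le (KT/N)^{1/2}$ on $[0,T]$ --- the same computation as for $M_t^{N,1}$ in Theorem~\ref{theo:coupling}. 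Adding the two estimates yields $\Theta_t^N \le C_T N^{-1/2} + C_T \int_0^t \Theta_s^N\, ds$, and Grönwall's lemma gives $\Theta_t^N \le C_T N^{-1/2}$ for all $t \le T$, proving the first claim.

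The second claim then follows by the triangle inequality combined with Theorem~\ref{theo:coupling} and exchangeability: $\E\,|u_t - U_t^N| \le \E\,|u_t - \tilde U_t^N| + \Delta_t^N \le C_T N^{-1/2}$, and $\E\big| \tfrac1N \sum_{j=1}^N R_t^N(j) - r_t \big| \le \E\big| \tfrac1N \sum_{j=1}^N ( R_t^N(j) - \tilde R_t^N(j) ) \big| + \E\,|\bar R_t^N - r_t| \le \E\,| R_t^N(1) - \tilde R_t^N(1) | + \Theta_t^N \le \Delta_t^N + C_T N^{-1/2} \le C_T N^{-1/2}$. I do not expect a serious obstacle: once the two systems are written in the closed forms \eqref{eq:dynapproxter} and \eqref{eq:system}, everything reduces to a scalar Grönwall estimate, and well-posedness of the limit equation needed to make sense of $(u_t,r_t)$ is supplied by Proposition~\ref{prop:uniquelimit} (the growth condition $\varphi(x)\le C\sqrt x$ there being automatic since $\varphi$ is bounded under Assumption~\ref{ass:1}). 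The only thing to monitor carefully is that every constant $C_T$ stays independent of $N$, which it does because the deterministic bound $r_0 + K/\lambda$ on $r_s$ and the variance bound on $\bar R_0^N$ are $N$-free.
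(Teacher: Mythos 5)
Your proposal is correct and follows essentially the same route as the paper: the same splitting of $\varphi(u_s)r_s-\varphi(\tilde U^N_s)\bar R^N_s$, the same use of the deterministic bound $r_s\le r_0+K/\lambda$ (which, as you rightly note, is what makes truncation of $\bar R^N_s$ unnecessary here), the same $N^{-1/2}$ bounds on $\E|r_0-\bar R^N_0|$ and on $M^{N,2}$ via its quadratic variation, followed by Grönwall and the triangle inequality with Theorem~\ref{theo:coupling}. No gaps.
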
 

\begin{proof}
\eqref{eq:system} together with \eqref{eq:dynapproxter} implies 
\begin{multline}\label{eq:devubis}
 | u_t - \tilde U^{N}_t | \le \beta \int_0^t  | u_s- \tilde U^{N}_s | ds+ \alpha \int_0^t | \varphi( \tilde U_s^{N } )\bar R_s^N - \varphi ( u_s) r_s | ds  \\
 \le  \beta \int_0^t  | u_s- \tilde U^{N}_s |  ds + C \int_0^t | \ \tilde U_s^{N }- u_s|  r_s   ds +   \alpha  K \int_0^t | r_s - \bar R_s^N | ds .
\end{multline}
We pass to expectation and use the fact that $r_s$ is deterministic and bounded by $ r_s \le r_0 + \frac{K}{  \lambda } , $ 
to deduce from this that 
$$  \E  | u_t - \tilde U^{N}_t | \le C \int_0^t  \left( \E  | u_s- \tilde U^{N}_s | + \E  | r_s -\bar R_s^N | \right) ds .$$
Moreover, 
\begin{equation}\label{eq:devrbis}
 | r_t - \bar R^{N}_t | \le | r_0 - \bar R^N_0 | + \lambda \int_0^t  | r_s- \bar R^{N}_s | ds+L_\varphi \int_0^t |  \tilde U_s^{N } -  u_s|   ds +  | M_t^{N, 2} |  .
\end{equation}
We use that $M_t^{N, 2}$ has quadratic variation
$$ <M^{N, 2} >_t = \frac1N \int_0^t \varphi ( \tilde U^N_s ) ds \le \frac{Kt}{N} $$
to obtain
\begin{equation}\label{eq:also}
\E | r_t - \bar R^{N}_t | \le \E  | r_0 - \bar R^N_0 | + \lambda \int_0^t \E  | r_s- \bar R^{N}_s | ds+C  \int_0^t \E |  \tilde U_s^{N } -  u_s|   ds + C_T \frac{1}{\sqrt{N}}  , 
\end{equation}
implying the assertion, since $\E  | r_0 - \bar R^N_0 | \le C N^{-1/2} .$ 
\end{proof}

Theorems \ref{theo:coupling} and \ref{theo:couplingbis} enable us finally to conclude that 

\begin{theo}\label{theo:conv}
Grant the assumptions of Theorem \ref{theo:coupling}. Then we have that for all $ i \geq 1, $ 
the sequence of processes $(U^{N}_t (i) , R^{N }_t (i) )_{t\geq 0}$ converges weakly to $ (u_t, R_t)_{ t \geq 0} $ in  $D(\R_+ , \R_+^2)$.
\end{theo}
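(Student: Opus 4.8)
The plan is to combine the tightness of Proposition~\ref{prop:6}(i) with an identification of all subsequential weak limits, exploiting the coupling estimates of Theorems~\ref{theo:coupling} and~\ref{theo:couplingbis} together with a \emph{pathwise} coupling of the limit equation to the $i$-th particle driven by the same Poisson measure. Fix $i=1$; the case of general $i$ follows by exchangeability of the system. By Proposition~\ref{prop:6}(i) the family $((U^N_t(1),R^N_t(1))_{t\ge 0})_{N\ge 1}$ is tight in $D(\R_+,\R_+^2)$, so it suffices to prove that every subsequential weak limit $(\bar U,\bar R)$, say along $N_k$, has the law of the solution $(u,R)$ of \eqref{eq:dynlimit}. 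The first component is identified immediately: Theorem~\ref{theo:couplingbis} gives $\E|U^N_t(1)-u_t|\to 0$, hence $U^{N_k}_t(1)\Rightarrow\delta_{u_t}$ for every $t$; since Skorokhod convergence forces convergence of the one-dimensional marginals at every time outside the at most countable set of fixed discontinuities of $\bar U$, we get $\bar U_t=u_t$ almost surely for $t$ in a co-countable set, and then $\bar U\equiv u$ almost surely, by right-continuity of $\bar U$ and continuity of the deterministic curve $u$ solving \eqref{eq:system}.

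To identify $\bar R$, put on the same probability space the pathwise unique solution $\hat R$ (Proposition~\ref{prop:uniquelimit}) of
\[
\hat R_t = R^N_0(1)-\lambda\int_0^t\hat R_s\,ds+\int_0^t\int_0^\infty 1_{\{z\le\varphi(u_{s-})\}}\,\bM^1(ds,dz),
\]
driven by the same Poisson measure $\bM^1$ that drives $R^N(1)$ in \eqref{eq:dynbis} and started from the same initial value; then $(u,\hat R)$ is a version of the solution $(u,R)$ of \eqref{eq:dynlimit}. Subtracting the two equations, the jump integrals differ only through their indicators, so $\sup_{t\le T}$ of the modulus of their difference is dominated by $\int_0^T\int_0^\infty|1_{\{z\le\varphi(U^N_{s-})\}}-1_{\{z\le\varphi(u_{s-})\}}|\,\bM^1(ds,dz)$, whose expectation equals $\int_0^T\E|\varphi(U^N_s)-\varphi(u_s)|\,ds\le L_\varphi\int_0^T\E|U^N_s(1)-u_s|\,ds\le C_TN^{-1/2}$ by Theorem~\ref{theo:couplingbis}. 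A Gr\"onwall estimate for $t\mapsto\sup_{s\le t}|R^N_s(1)-\hat R_s|$ then gives $\E\bigl[\sup_{t\le T}|R^N_t(1)-\hat R_t|\bigr]\le C_TN^{-1/2}$, so $R^N(1)\to\hat R$ in probability uniformly on compact time intervals, hence in $D(\R_+,\R_+)$, hence in distribution; therefore $\bar R\stackrel{d}{=}\hat R\stackrel{d}{=}R$.

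Since $\bar U\equiv u$ is deterministic, the law of $(\bar U,\bar R)$ is determined by its two marginals and coincides with that of $(u,R)$; as this holds for every subsequential limit, tightness gives $(U^N(1),R^N(1))\Rightarrow(u,R)$ in $D(\R_+,\R_+^2)$, and exchangeability transfers the conclusion to every coordinate $i$. The point to watch — and the reason Proposition~\ref{prop:6} by itself does not suffice — is that the pointwise-in-$t$ bounds of Theorems~\ref{theo:coupling} and~\ref{theo:couplingbis} only pin down the deterministic marginal $u$, the calcium marginal of the limit being genuinely non-degenerate, so it is the pathwise coupling against $\bM^1$ that is doing the real identification work; once this is recognized, everything else is routine. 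In particular one could dispense with tightness altogether by upgrading Theorems~\ref{theo:coupling} and~\ref{theo:couplingbis} to uniform-in-time estimates $\E[\sup_{t\le T}(\cdots)]\le C_TN^{-1/2}$ — at the cost only of Doob's maximal inequality for the martingale terms $M^{N,1},M^{N,2}$ and the very same truncation of $\bar R^N$ already used in the proof of Theorem~\ref{theo:coupling} for the non-Lipschitz product term — and then reading off local uniform convergence, and hence weak convergence, directly.
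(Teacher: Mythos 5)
Your proof is correct, but it identifies the calcium component by a genuinely different route than the paper. For the $U$-marginal both arguments are the same (Theorem~\ref{theo:couplingbis} forces any subsequential limit to equal the deterministic curve $u$). For the $R$-marginal, the paper shows that any limit point $(u,S)$ solves the martingale problem with generator $g\mapsto -\lambda r g'(r)+\varphi(u_t)[g(r+1)-g(r)]$ and then invokes the semimartingale-characteristics and representation theorems of Jacod--Shiryaev (Theorems II.2.42 and III.2.26 of \cite{js}) to realize $S$ as a solution of the limit SDE driven by some Poisson random measure, concluding $S\stackrel{\mathcal L}{=}R$. You instead exploit that, under Assumption~\ref{ass:2}, the limit intensity $\varphi(u_s)$ is deterministic, so the $R$-equation is linear and can be coupled pathwise to $R^N(1)$ through the common driving measure $\bM^1$: the compensation formula turns the difference of jump indicators into $\int_0^T\E|\varphi(U^N_s)-\varphi(u_s)|\,ds\le C_TN^{-1/2}$, and Gr\"onwall yields $\E[\sup_{t\le T}|R^N_t(1)-\hat R_t|]\le C_TN^{-1/2}$. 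This is exactly the mechanism the paper itself uses elsewhere (the martingale $M^{N,3}$ in Section 7), but not in its proof of this theorem. Your route buys a quantitative rate and in fact convergence in probability in the local uniform topology, which is strictly stronger than the stated weak convergence, and it avoids the abstract representation machinery; the paper's martingale-problem route is the one that would survive in settings where the limit intensity is random and no explicit coupling is available. Two small points to keep clean: the auxiliary process $\hat R$ depends on $N$ through its initial condition $R^N_0(1)$, so either realize the initial data from a fixed i.i.d. sequence or finish with the converging-together lemma (each $\hat R$ has the law of $R$, and the distance to $R^N(1)$ vanishes); and the final "two marginals determine the joint law" step is legitimate only because $\bar U$ is almost surely a fixed deterministic path, which you do note.
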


\begin{proof}
Take any weakly convergent subsequence of $ (U^{N}_t (i) , R^{N }_t (i) ) $ and call $Z=  ( V, S ) $ its weak limit. We suppose that $ (V, S) $ is defined on a filtered probability space $ ( \Omega', {\mathcal A}', ( {\mathcal F_t}')_{t \geq 0}, P' ) , $ where 
$$  {\mathcal F_t}' = \bigcap_{ s > t }  {\mathcal F_s}^0, {\mathcal F_s}^0 = \sigma ( V_t , S_t, t \le s  ) .$$  Theorem \ref{theo:coupling} and \ref{theo:couplingbis} imply that $ V = u $ almost surely (since $u$ is deterministic). Moreover it is straightforward to show that the limit $(u, S) $ must be solution of the following martingale problem. For  any smooth and bounded test function $ \psi ,$ any  $ s_1 < s_2 < \ldots < s_k \le s < t, $ for  continuous and bounded test functions $ \psi_i ,$ we have
\begin{equation} 
\E \left[ \left( \psi (S_t) -\psi ( S_s) - \lambda \int_s^t \psi' ( S_v) S_v  dv - \int_s^t  \varphi ( u_v ) [ \psi( S_v+ 1 ) - \psi( S_v) ]  dv  \right)  \prod_{i=1}^k \psi_i ( Z_{s_i} ) \right] = 0 .
\end{equation}
By \cite[Theorem II.2.42, page 86]{js}, and using the right-continuity of $S,$ this implies that  $S$ is a $( P', ( {\mathcal F_t}')_{t \geq 0})-$semi-martingale with characteristics $ B = - \lambda \int_0^\cdot  S_s ds , $ $ \nu (ds , dx ) = \varphi (u_s) ds \delta_1 (dx) , $ $C_t=0  .$ Moreover, \cite[Theorem III.2.26, page 157]{js} implies that there exists a Poisson random measure $ \pi $ defined on $ ( \Omega', {\mathcal A}', ( {\mathcal F_t}')_{t \geq 0}, P' ) , $ such that $S$ is solution of 
$$ S_t = S_0 - \lambda \int_0^t S_v dv +  \int_0^t \int_0^\infty 1_{\{ z \le \varphi ( u_s ) \}} \pi (ds, dz ) ,$$
where $ S_0 $ is $g_0-$distributed. In other words, $ S \stackrel{\mathcal L}{=} R.$ Hence any weak limit has the same law, implying the weak convergence of $ (U^{N}_t (i) , R^{N }_t (i) ) .$   
\end{proof}

\section{Stationary solutions of the limit equation}
For smooth spiking rate functions $ \varphi , $ by Theorem \ref{prop:dieout},  the finite size system has only one invariant state corresponding to extinction of the system. The limit system can however display several invariant states as we show now, including persistent behavior where the spiking activity of the system survives forever. 

Recall that passing to expectation and writing $ u_t = \E U_t , r_t = \E R_t, $ we have reduced the limit system  to 
\begin{equation}\label{eq:systembis}
\left\{ 
\begin{array}{lcl}
d u_t &=&  - \beta u_t dt +  \alpha \varphi ( u_t) r_t dt\\
d r_t &=& - \lambda r_t dt + \varphi (u_t ) dt  
\end{array}
\right\} .  
\end{equation}
Any stationary solution $ (u^* , r^* ) $ of \eqref{eq:systembis} must satisfy 
\begin{equation}
 \lambda r^* = \varphi ( u^* ) \; \mbox{ and } \;  \beta u^* = \alpha \varphi ( u^* ) r^* =\frac{ \alpha}{ \lambda} \varphi^2 (u^* )  ,
\end{equation}
implying that
\begin{equation}\label{eq:fixpoint}
u^* = \frac{ \alpha}{ \beta  \lambda} \varphi^2 (u^* )  .
\end{equation}
Of course, $ ( 0, 0 )$ is always a stationary solution since $ \varphi( 0) = 0 . $ However, for suitable choices of 
$$ \kappa :=  \frac{ \alpha}{ \beta  \lambda} $$ 
($\kappa$ is homogeneous to a potential times a time squared) and of the form of $ \varphi, $ also other stationary solutions appear, some of them  being attracting. Let us come back to the example already presented in Section 2.

\begin{ex}\label{ex:ph}
We consider spiking rate functions of sigmoid type. They are defined in terms of a parameter  $a  > 1  $ satisfying that  
$$ \frac{4a }{1 + e^{ a} } < 1 .$$ 
Let   
$$ \varphi ( x) = \frac{4a}{ 1 + e^{-(x -a)}} - \frac{4a}{ 1 + e^{ a}} , x \geq 0 .$$
Then $\varphi ( 0) = 0, $ $0 < \varphi ' (0) < 1 ,$ $\varphi'' > 0 $ on $ [ 0, a [ $ and $\varphi '' < 0 $ on $] a, \infty [ ,$ thus $ a$ is the inflexion point of $ \varphi .$ 
We have that  
$$ \varphi (a ) =  4 a \left( \frac12 - \frac{1}{ 1 + e^{ a}} \right) \geq 4 a \left( \frac12 - \frac14 \right) = a , $$
since $ \frac{4a }{1 + e^{ a} } < 1 $ and $ a > 1 $ imply that $  \frac{1 }{1 + e^{ a} } < \frac14. $ 
Notice that $ \varphi^2  (0) = 0 ,$ $ (\varphi^2 )' (0 ) = 0 $ and $\varphi^2  (a) = a^2 > a $ (recall that  $ a > 1$). This implies that for some $ x_1 \in ] 0, a [  $ we have  $ \varphi^2  ( x_1 ) = x_1 .$ Finally, boundedness of $\varphi^2  $ implies the
existence of a second point $ x_2 > a $ with $ \varphi^2  ( x_2 ) = x_2 .$   
\end{ex}

Our Assumption \ref{ass:psi} implies the existence of at least two non trivial solutions of  $\kappa \varphi^2  ( x) = x $ in $] 0, \infty [  , $ for all $ \kappa \geq D.$ This means that two non-trivial stationary solutions for \eqref{eq:systembis} exist. Let $  u^{max} > 0  $ be the maximal solution of  $\kappa \varphi^2  ( x) = x $ in $] 0, \infty [   $ and  write  $ r^{max} := \frac{1}{\lambda } \varphi ( u^{max} ) .$ The point $ ( u^{max}, r^{max} ) $ is locally attracting as shows the following proposition.

\begin{prop}\label{lem:attr}
Grant Assumption \ref{ass:psi} and suppose that $\kappa \geq D. $ Then  $(0,0)$ and $  (u^{max}, r^{max}  )$ are locally attracting equilibria of \eqref{eq:systembis}. 
\end{prop}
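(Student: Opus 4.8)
The plan is to use linearization (Lyapunov's indirect method). Write the vector field on the right-hand side of \eqref{eq:systembis} as $F(u,r) = \bigl(-\beta u + \alpha\varphi(u)r,\; -\lambda r + \varphi(u)\bigr)$. Since $\varphi(0)=0$ and $\varphi\ge 0$, the quadrant $\R_+^2$ is forward invariant, so local attractivity is a meaningful notion on the state space; and since $\varphi$ is differentiable at $0$ and at $u^{max}$, $F$ is differentiable at both equilibria. It therefore suffices to show that the Jacobian of $F$ at each equilibrium has spectrum in the open left half-plane: one then concludes with the quadratic Lyapunov function $V(x)=(x-x_e)^\top P(x-x_e)$, where $P$ solves $DF(x_e)^\top P + P\,DF(x_e) = -\mathrm{Id}$, which gives local asymptotic stability. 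For the $2\times 2$ matrix
\[
DF(u,r) = \begin{pmatrix} -\beta + \alpha\varphi'(u)r & \alpha\varphi(u) \\ \varphi'(u) & -\lambda \end{pmatrix}
\]
this amounts to checking $\operatorname{tr} DF(x_e) < 0$ and $\det DF(x_e) > 0$.

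At $(0,0)$ the relation $\varphi(0)=0$ makes $DF(0,0)$ lower triangular with diagonal entries $-\beta$ and $-\lambda$; both are negative, hence $(0,0)$ is locally attracting.

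At $(u^{max},r^{max})$ I would substitute the stationarity relations $\lambda r^{max}=\varphi(u^{max})$ and $u^{max}=\kappa\varphi^2(u^{max})$, with $\kappa=\alpha/(\beta\lambda)$. Using $\alpha r^{max}=\tfrac{\alpha}{\lambda}\varphi(u^{max})$ together with the identity $(\kappa\varphi^2)'(u^{max})=\tfrac{2\alpha}{\beta\lambda}\varphi(u^{max})\varphi'(u^{max})$, one obtains
\[
\operatorname{tr} DF(u^{max},r^{max}) = -\beta-\lambda+\tfrac{\beta}{2}(\kappa\varphi^2)'(u^{max}), \qquad \det DF(u^{max},r^{max}) = \beta\lambda\bigl(1-(\kappa\varphi^2)'(u^{max})\bigr),
\]
so both signs are governed by the single quantity $(\kappa\varphi^2)'(u^{max})$. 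Now $u^{max}$ is the \emph{maximal} solution of $\kappa\varphi^2(x)=x$, and since $\varphi$ is bounded the function $h(x):=\kappa\varphi^2(x)-x$ is negative on $(u^{max},\infty)$ while $h(u^{max})=0$; this forces $h'(u^{max})\le 0$, i.e. $(\kappa\varphi^2)'(u^{max})\le 1$. Plugging this in already yields $\operatorname{tr} DF(u^{max},r^{max})\le -\tfrac{\beta}{2}-\lambda<0$ and $\det DF(u^{max},r^{max})\ge 0$.

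The remaining — and, I expect, the main — point is to upgrade the determinant to a strict inequality, equivalently to show that $u^{max}$ is a \emph{simple} (transversal) zero of $h$. Here I would exploit Assumption \ref{ass:psi} more fully: rewriting $\kappa\varphi^2(x)=x$ as $\Phi(x):=\varphi^2(x)/x = 1/\kappa$, one has $\Phi(0^+)=\Phi(\infty)=0$ and $\Phi>0$ on $(0,\infty)$, so $\Phi$ attains a positive maximum; the hypothesis that the set of solutions of $\kappa\varphi^2(x)=x$ consists of exactly the three points $0<x_1<x_2=u^{max}$ (and of only those, for all relevant $\kappa\ge D$) forces $u^{max}$ to lie on a branch where $\Phi$ is strictly decreasing, which is precisely $\Phi'(u^{max})<0$, i.e. $(\kappa\varphi^2)'(u^{max})<1$. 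A persistence-of-solution-count argument in $\kappa$ makes the same point: a non-transversal crossing at $u^{max}$ for some $\kappa\ge D$ would change the number of solutions for nearby values of $\kappa$. Once $(\kappa\varphi^2)'(u^{max})<1$ is established, $\det DF(u^{max},r^{max})>0$ and $\operatorname{tr} DF(u^{max},r^{max})<0$, so the linearization theorem shows that $(u^{max},r^{max})$ is locally asymptotically stable, which completes the proof.
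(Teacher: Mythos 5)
Your route is genuinely different from the paper's. You linearize at the two equilibria and check the Routh--Hurwitz conditions; the paper instead draws the two null-clines $r=\tfrac1\lambda\varphi(u)$ and $r=\tfrac{\beta}{\alpha}u/\varphi(u)$, cuts the quadrant into five regions, and reads off the signs of $\dot u$ and $\dot r$ in each region to conclude that trajectories near $(0,0)$ (resp.\ near $(u^{max},r^{max})$) stay trapped and converge. Your computations are correct: $DF(0,0)$ is triangular with diagonal $-\beta,-\lambda$, and at $(u^{max},r^{max})$ the stationarity relations do give $\operatorname{tr}=-\beta-\lambda+\tfrac\beta2(\kappa\varphi^2)'(u^{max})$ and $\det=\beta\lambda\bigl(1-(\kappa\varphi^2)'(u^{max})\bigr)$, so everything indeed reduces to $(\kappa\varphi^2)'(u^{max})<1$. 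What the paper's phase-plane argument buys is that it never needs hyperbolicity, and it also yields the qualitative description of the basins of attraction that item 3 of Theorem \ref{theo:4} relies on; what your argument buys is an explicit spectral statement and exponential local convergence when the crossing is transversal.

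The genuine gap is exactly the step you flag: neither of your two arguments establishes the strict inequality $(\kappa\varphi^2)'(u^{max})<1$. Writing $h(x)=\kappa\varphi^2(x)-x$, an odd-order tangency at $x_2$ (locally $h(x)\approx -c(x-x_2)^{3}$, with $h>0$ on $(x_1,x_2)$ and $h<0$ beyond) is compatible with the hypothesis that $D\varphi^2(x)=x$ has \emph{exactly} three solutions, so the solution count does not force transversality. The persistence-in-$\kappa$ argument fails for the same configuration: since $\partial_\kappa h=\varphi^2>0$, varying $\kappa$ just shifts the graph vertically, and an odd-order sign-changing tangency keeps exactly one local root under such a shift, so the count of solutions does not change and no contradiction arises (moreover the assumption only pins down the count at $\kappa=D$, not for all $\kappa\ge D$). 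In the degenerate case $\det DF(u^{max},r^{max})=0$, the Lyapunov equation $DF^\top P+P\,DF=-\id$ has no positive solution and the first-approximation theorem is silent; you would need a center-manifold (or monotonicity) argument to conclude, which is essentially what the paper's region-by-region analysis supplies. To be fair, the paper's own proof also implicitly assumes the generic ordering of the null-clines on $(x_1,x_2)$; but as written, your proof needs either an added non-degeneracy hypothesis ($(\kappa\varphi^2)'(u^{max})<1$) or a separate treatment of the zero-eigenvalue case.
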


\begin{figure} 
\begin{center}
   \includegraphics[width=17cm]{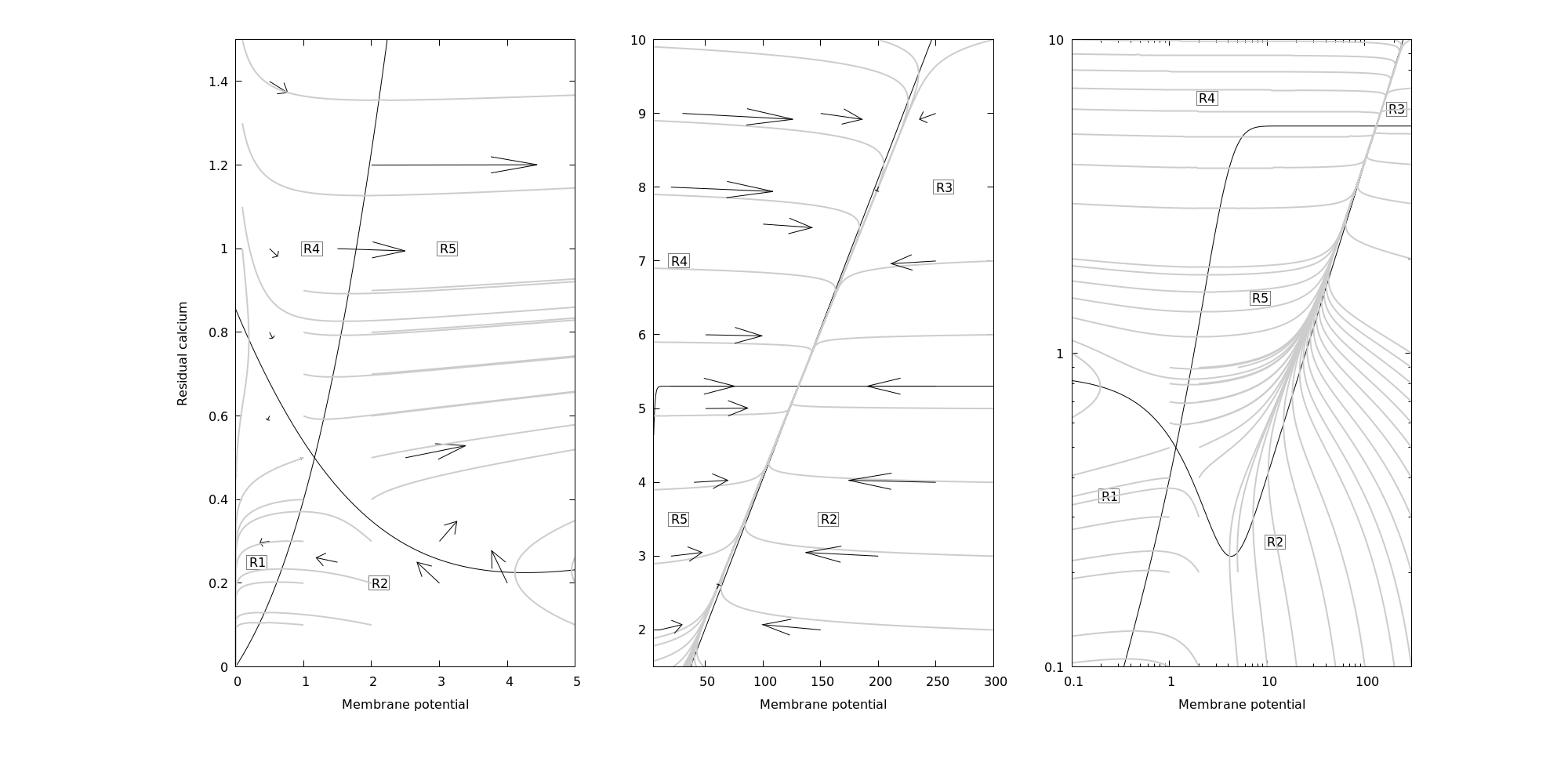} 
\end{center}
\caption{Phase portrait of the dynamical system given in \eqref{eq:systembis} with $\varphi $ as in Example \ref{ex:ph} and $\alpha=107.78$; $\beta=50$; $\lambda=2.16$; $a=3.$ Left : Plot showing the unstable equilibrium $ (x_1, r_1), r_1 = \frac{1}{\lambda } \varphi ( x_1) .$ The black curve starting from $(0, 0)$  is the null-cline of $r, $ the other black curve the null-cline of $u.$ Trajectories starting in $R_1$ are attracted to $ (0,0). $ Middle : Plot showing the stable equilibrium $ (x_2, r_2 ), $ the upper black curve is the null-cline of $r.$ The arrows represent the tangent vectors of the flow, their length correspond to $ 1/100$ of their true length. Right : Plot on a log-log scale showing both equilibria.}
\end{figure}

\begin{proof}
Write $ \Phi (u ) = u / \varphi (u ) .$ Then $ \{ r = \frac{1}{\lambda}  \varphi (u ) \} $ is the null-cline of $r_t $ and $ \{ r = \frac{\beta}{\alpha}  \Phi (u ), u > 0 \} \cup \{ r= u = 0 \} $ the null-cline of $ u_t .$ We suppose w.l.o.g. that $\kappa \varphi^2  ( x) = x $ possesses exactly two non trivial solutions $ 0 < x_1 < x_2 = u^{max} $ in $ ] 0, \infty [.$ Then
we have five regions 
$$ R_1 = \{ ( u, r) : u \le x_1 , \frac{1}{\lambda} \varphi ( u ) \le r \le \frac{\beta}{\alpha} \Phi (u ) \} , R_2 = \{ ( u, r ) : r \le \frac{1}{\lambda}  \varphi (u ) \wedge  \frac{\beta}{\alpha} \Phi (u ) \} ,$$
$$ R_3 = \{ ( u, r) : u \geq x_2 , \frac{1}{\lambda}  \varphi ( u ) \le r \le  \frac{\beta}{\alpha} \Phi (u ) \},  R_4 = \{ (u, r ) : r \geq \frac{1}{\lambda}  \varphi (u ) \vee  \frac{\beta}{\alpha} \Phi (u )\}$$
and finally
$$ R_5 = \{ (u, r ) : x_1 \le u \le x_2,  \frac{\beta}{\alpha} \Phi ( u ) \le r \le \frac{1}{\lambda}  \varphi (u ) \} ,$$
see Figure 3.  
On $R_1 \cup R_3,$ both $ u_t $ and $r_t $ decrease. On $ R_2, $ only $r_t$ increases while $ u_t$ decreases. Thus, for sufficiently small $ u_0, $ trajectories starting within $R_2 $ will enter $ R_1 $ after some time and then be attracted to $ (0, 0) .$ The other trajectories starting within $R_2$ will either enter $ R_5 $ or $R_3.$ In both cases, they finish being attracted to $ (u^{max}, r^{max}  ).$ The same argument applies to trajectories starting in  $ R_4, $ where $ u_t$ increases, while $r_t$ decreases. 
\end{proof}

Let us finally shortly discuss the role of $\kappa $ on the number of stationary limit states. 

\begin{prop}
Grant Assumption \ref{ass:psi}. There exists $ \kappa_c \le D $ such that for all $ \kappa < \kappa_c,$ no non-trivial solution to \eqref{eq:fixpoint} exist, and for all $ \kappa > \kappa_c , $ at least two non-trivial solutions exist.
\end{prop}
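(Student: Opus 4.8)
The plan is to reduce the statement to a one–dimensional analysis of the function
$$ \Phi(u) := \frac{u}{\varphi^2(u)}, \quad u > 0 , $$
which is well defined and continuous on $]0,\infty[$ because $\varphi$ is continuous and, since $\varphi(0)=0$, $\varphi$ is non-decreasing and $\varphi'(0)>0$, we have $\varphi(u)>0$ for every $u>0$. By the definition of $\kappa = \alpha/(\beta\lambda)$, a point $u^*>0$ solves \eqref{eq:fixpoint} for this value of $\kappa$ if and only if $\Phi(u^*)=\kappa$, so everything reduces to describing the range and the level sets of $\Phi$.

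First I would settle the behaviour of $\Phi$ at the two ends of $]0,\infty[$. Near $0$, differentiability of $\varphi$ at $0$ with $\varphi(0)=0$ gives $\varphi(u)/u\to\varphi'(0)>0$, hence $\Phi(u)\sim(\varphi'(0))^{-2}u^{-1}\to+\infty$ as $u\to0^+$. Near $+\infty$, boundedness of $\varphi$ (recall $\varphi$ is bounded, cf.\ Assumption \ref{ass:1}) gives $\Phi(u)\ge u/\|\varphi\|_\infty^2\to+\infty$. Thus $\Phi$ is continuous and coercive at both ends of $]0,\infty[$, so it attains a global minimum at some $u_0\in\,]0,\infty[$; I set $\kappa_c:=\Phi(u_0)=\min_{u>0}\Phi(u)$, which is $>0$ since $\Phi$ is strictly positive. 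Assumption \ref{ass:psi} provides $x_1>0$ with $D\varphi^2(x_1)=x_1$, i.e.\ $\Phi(x_1)=D$, whence $\kappa_c\le\Phi(x_1)=D$, which is the required bound.

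It then remains to read off the two regimes. If $\kappa<\kappa_c$, then $\Phi(u)\ge\kappa_c>\kappa$ for all $u>0$, so $\Phi(u)=\kappa$ has no solution and \eqref{eq:fixpoint} has no non-trivial solution. If $\kappa>\kappa_c$, I would apply the intermediate value theorem on $]0,u_0[$ and on $]u_0,+\infty[$ separately: on the first, $\Phi$ runs continuously from $+\infty$ down to $\Phi(u_0)=\kappa_c<\kappa$, producing $u_1\in\,]0,u_0[$ with $\Phi(u_1)=\kappa$; on the second, $\Phi$ runs continuously from $\kappa_c<\kappa$ back up to $+\infty$, producing $u_2\in\,]u_0,+\infty[$ with $\Phi(u_2)=\kappa$. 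Since $u_1<u_0<u_2$, these are two distinct non-trivial solutions of $\kappa\varphi^2(u)=u$, as claimed.

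I do not expect a serious obstacle here; the argument is a routine monotonicity/IVT analysis once $\Phi$ is introduced. The two points requiring a line of care are (i) checking that the infimum of $\Phi$ is actually attained, which uses coercivity at both ends of $]0,\infty[$ and hence, at $+\infty$, the boundedness of $\varphi$; and (ii) ensuring the two level-set points produced for $\kappa>\kappa_c$ are genuinely distinct, which is why the minimiser $u_0$ must be used as the splitting point and why the inequality $\kappa>\kappa_c$ must be strict. Note that the statement asks only for $\kappa_c\le D$, so we need not decide whether $\kappa_c=D$ can occur (it can, precisely when $\Phi\ge D$ on $]0,\infty[$), nor count solutions exactly at $\kappa=\kappa_c$.
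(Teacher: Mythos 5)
The paper states this proposition at the end of Section 6 without giving any proof, so there is nothing to compare your argument against line by line; what matters is whether your argument stands on its own, and it does. Reducing \eqref{eq:fixpoint} to the level sets of $\Phi(u)=u/\varphi^2(u)$ is the natural move, and each step checks out: $\Phi$ is continuous and positive on $]0,\infty[$ (Assumption \ref{ass:psi} forces $\varphi(0)=0$, and monotonicity plus $\varphi'(0)>0$ give $\varphi>0$ on $]0,\infty[$); $\Phi(u)\to\infty$ as $u\to 0^+$ by differentiability at $0$; the minimum is attained by coercivity; $\kappa_c=\min\Phi\le\Phi(x_1)=D$; and the intermediate value theorem on either side of the minimiser produces two distinct solutions for $\kappa>\kappa_c$, with strict distinctness guaranteed by $\Phi(u_0)=\kappa_c<\kappa$. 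The one point worth flagging is that coercivity at $+\infty$ uses the boundedness of $\varphi$, which belongs to Assumption \ref{ass:1} rather than to Assumption \ref{ass:psi}, the only hypothesis formally granted in the statement; since Assumption \ref{ass:1} is a standing hypothesis throughout the paper (the constant $K=\|\varphi\|_\infty$ is used everywhere), this is a harmless borrowing, but it would be worth one sentence making the dependence explicit. Your closing remarks on why $u_0$ must be the splitting point, why $\kappa>\kappa_c$ must be strict, and why nothing need be said at $\kappa=\kappa_c$ are exactly the right points of care.
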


\section{Deviation inequalities and proof of Theorem \ref{theo:4}}
We are now able to conclude the paper with the proof of Theorem \ref{theo:4}. In what follows we shall use the martingales $ M^{N, 1} $ and $M^{N, 2 } $ which have been defined in \eqref{eq:m1} and \eqref{eq:m2}. We also introduce
$$ M_t^{N, 3 } = \frac1N \sum_{i=1}^N \int_0^t \int_{\R_+} | 1_{ \{ z \le \varphi ( U^N_{s-} ) \} } -  1_{ \{ z \le \varphi ( \tilde U^N_{s-} ) \} } | \tilde \bM^i (ds, dz ) .$$
Finally we use the definition of $\bar R_t^N, $ given in \eqref{eq:barrn} above. 

The first item of Theorem \ref{theo:4} will then be a consequence of 
\begin{theo}
Fix $T > 1 $  and grant the assumptions of Theorem \ref{theo:coupling}. Suppose moreover that $ \int_0^\infty e^{ u r} g_0 ( dr ) < \infty $ for some $ u > 0 .$  Then for any $ R > 0 , $ there exists a constant $C_R$ depending only on the parameters of the model and on  $R,$ such that 
\begin{multline}\label{eq:devontun}
\sup_{s \le T} \left( | U_s^N - u_s |  + | \frac1N \sum_{i=1}^N R_s^{N } (i)  - r_s | \right) \le\\
 (C_R)^{T+1}  \left(  \int_0^T  1_{ \{ \bar R_{s}^N > R  \} } \bar R_{s}^N ds + | \bar R_0^N - r_0 | + \sup_{s \le T} | M_s^{N, 1} |  + \sup_{s \le T} | M_s^{N, 2 }| + \sup_{s \le T } | M_s^{N, 3} |  \right) .
\end{multline}
Consequently for all $ \varepsilon > 0 $ there exists $ N_0 = N_0 ( T, \varepsilon ), $ such that for all $ N \geq N_0,$ we obtain the following deviation bound
\begin{equation}
 \P \left( \sup_{s \le T} \left( | U_s^N - u_s |  + | \frac1N \sum_{i=1}^N R_s^{N } (i)  - r_s | \right)\geq  N^{-1/5}  \varepsilon \right) \le \\
C_T  e^{ - c  \varepsilon^2  \sqrt{N} }.
\end{equation} 
\end{theo}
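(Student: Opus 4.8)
The plan is to establish the deterministic bound \eqref{eq:devontun} first, and then to control the five terms on its right-hand side probabilistically.

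\textbf{Step 1: the deterministic estimate.} Write $\mathcal R^N_t:=\frac1N\sum_{i=1}^N R^N_t(i)$ and recall $\bar R^N_t=\frac1N\sum_{i=1}^N\tilde R^N_t(i)$ from \eqref{eq:barrn}. I would run the triangle inequality through the approximating system, bounding $\sup_{s\le t}\big(|U^N_s-\tilde U^N_s|+|\mathcal R^N_s-\bar R^N_s|\big)$ and $\sup_{s\le t}\big(|\tilde U^N_s-u_s|+|\bar R^N_s-r_s|\big)$ separately. Subtracting the $\tilde U^N$-equation of \eqref{eq:dynapproxter} from the representation of $U^N$ carrying $M^{N,1}$ (as in the proof of Theorem~\ref{theo:coupling}), and likewise subtracting \eqref{eq:system} from \eqref{eq:dynapproxter}, the only terms not already of Gr\"onwall type are $\alpha|\varphi(U^N_s)-\varphi(\tilde U^N_s)|\,\bar R^N_s$ and $\alpha|\varphi(\tilde U^N_s)-\varphi(u_s)|\,\bar R^N_s$; using that $\varphi$ is bounded I split each of these as a Lipschitz term $\alpha L_\varphi R$ times the potential difference, plus $2\alpha\|\varphi\|_\infty\bar R^N_s 1_{\{\bar R^N_s>R\}}$, which replaces the dangerous factor $\bar R^N_s$ by the integrable quantity of \eqref{eq:devontun}. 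Comparing the calcium dynamics of $\mathcal R^N$ and of $\bar R^N$, and accounting for the replacement of $1_{\{z\le\varphi(U^N_{s-})\}}$ by $1_{\{z\le\varphi(\tilde U^N_{s-})\}}$ there, brings in $M^{N,1}$ together with an error controlled by $\sup_{s\le T}|M^{N,3}_s|+2L_\varphi\int_0^t|U^N_s-\tilde U^N_s|\,ds$, while comparing $\bar R^N$ with $r$ (as in \eqref{eq:devubis}--\eqref{eq:devrbis}, with $\sup_{s\ge0}r_s\le r_0+\|\varphi\|_\infty/\lambda$) brings in $|\bar R^N_0-r_0|$ and $M^{N,2}$. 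Gr\"onwall's lemma applied to each of the two $\sup$-functions — the exponential constant depending on $R$ only through $\alpha L_\varphi R$ — and summing then give \eqref{eq:devontun} with the prefactor $(C_R)^{T+1}$.

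\textbf{Step 2: the easy terms.} Fix $R:=r_0+\|\varphi\|_\infty/\lambda+1$, so that $C_R$ becomes a constant of type $C_T$, and union-bound $\P\big(\text{r.h.s.\ of }\eqref{eq:devontun}\ge N^{-1/5}\varepsilon\big)$ over its five summands, each with threshold $\tilde\delta$ of order $N^{-1/5}\varepsilon/(C_R)^{T+1}$. Since $\bar R^N_s\le\frac1N\sum_{i=1}^N\check R^N_s(i)$ with the $\check R^N(i)$ i.i.d.\ and $\sup_{s\ge0}\E\check R^N_s(1)\le R-1$, the event $\{\int_0^T 1_{\{\bar R^N_s>R\}}\bar R^N_s\,ds>0\}$ is contained in $\{\sup_{s\le T}\bar R^N_s>R\}$, a collective deviation of probability $\le C_Te^{-cN}$ by Chernoff's inequality, using the (uniform in $s$) exponential moments of $\check R^N_s(1)$ — which follow from the exponential Lyapunov bound behind \eqref{eq:tobeusedlater} and from $\int e^{ur}g_0(dr)<\infty$. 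The quantity $|\bar R^N_0-r_0|$ is the fluctuation of an empirical mean of i.i.d.\ variables with an exponential moment, so the sub-exponential Bernstein inequality bounds $\P(|\bar R^N_0-r_0|>\tilde\delta)$ by $\exp(-cN\tilde\delta^2)\asymp\exp(-c_T\varepsilon^2 N^{3/5})$; since $3/5>1/2$, for $N\ge N_0(T)$ this is $\le e^{-c\varepsilon^2\sqrt N}$, the extra exponent $N^{3/5}$ (a consequence of the deviation scale being $N^{-1/5}$) providing precisely the room needed to absorb the $T$-dependent prefactor $(C_R)^{T+1}$. Finally, $M^{N,2}$ and $M^{N,3}$ both have jumps of size $1/N$ and predictable quadratic variation $\le C_T/N$, so Bernstein's inequality for c\`adl\`ag martingales gives $\P(\sup_{s\le T}|M^{N,j}_s|\ge\tilde\delta)\le 2\exp\!\big(-cN\tilde\delta^2/(C_T+\tilde\delta)\big)$, $j=2,3$, which is handled in the same way.

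\textbf{Step 3: the term $M^{N,1}$, and the main obstacle.} The genuinely problematic term is $\sup_{s\le T}|M^{N,1}_s|$, since the jumps of $M^{N,1}$ are of size $\propto R^N_{s-}(j)/N$ and hence \emph{unbounded}. On the event $\big\{\sup_{s\le T}\frac1N\sum_{i=1}^N(R^N_s(i))^2\le M_2\big\}$ one has $\langle M^{N,1}\rangle_T\le C_T/N$ (using that the single-neuron contributions never jump simultaneously), but the jump bound still involves $\frac\alpha N\max_i\sup_{s\le T}R^N_s(i)$. I would therefore truncate the calcium at a level $b=b_N$, writing $M^{N,1}=M^{N,1,\le b}+M^{N,1,>b}$ according to whether the concentration of the spiking neuron is $\le b$ or $>b$; Bernstein's inequality handles $M^{N,1,\le b}$ (jumps $\le\alpha b/N$, quadratic variation $\le C_T/N$), while $M^{N,1,>b}$, each of whose jumps forces a calcium value above $b$, must be bounded through a maximal inequality that exploits both the rarity of spikes by high-calcium neurons and the exponential moments of the dominating i.i.d.\ processes $\check R^N(j)$. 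The \textbf{main obstacle} is precisely this last estimate and the attendant choice of $b$: $b$ must be large enough that $M^{N,1,>b}$ is negligible on the relevant scale, yet small enough that Bernstein's bound for $M^{N,1,\le b}$ — which deteriorates in its Poisson regime as $\alpha b/N$ grows — still delivers the target rate at deviation level $N^{-1/5}$. The resulting rate cannot be better than $e^{-c\sqrt N}$ for a structural reason already visible in Step 2: the failure event $\{\frac1N\sum_i(R^N_s(i))^2>M_2\text{ for some }s\le T\}$ has probability only $\asymp e^{-c\sqrt N}$, this being governed by the one-big-jump event that a single initial value $R^N_0(i)$ is of order $\sqrt N$, since the squares $(R^N_0(i))^2$ have merely stretched-exponential tails under Assumption~\ref{ass:2} together with $\int e^{ur}g_0(dr)<\infty$. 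Collecting the five contributions yields the deviation bound, hence the theorem.
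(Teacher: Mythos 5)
Your Step 1 reproduces the paper's deterministic estimate in substance (the paper reaches the same conclusion by iterating the bound over subintervals of a length $s_2(R)$ rather than by a one-shot Gr\"onwall, which is how the prefactor $(C_R)^{T+1}$ arises, but the two are equivalent), and your treatment of $|\bar R_0^N - r_0|$, $M^{N,2}$ and $M^{N,3}$ is sound. However, there is a genuine gap exactly where you flag ``the main obstacle'': you never actually establish the tail bound for $\sup_{s\le T}|M^{N,1}_s|$. Your truncation scheme leaves the piece $M^{N,1,>b}$ controlled only by an unspecified ``maximal inequality exploiting the rarity of spikes by high-calcium neurons,'' and the tension you describe in choosing $b_N$ is real: no choice of $b$ is exhibited for which both halves close at the deviation level $N^{-1/5}\varepsilon$ with rate $e^{-c\varepsilon^2\sqrt N}$. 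The paper avoids truncating the jumps altogether: it applies the Dzhaparidze--van Zanten Bernstein inequality \eqref{eq:bernstein} with the threshold $a=0$, so that the conditioning variable is $H_T^0=[M^{N,1}]_T+\langle M^{N,1}\rangle_T$, i.e.\ \emph{all} squared jumps are swept into $H_T^0$ and $\psi(0)=1$ gives a purely sub-Gaussian bound $2\exp(-z^2/2L)$ on the event $\{H_T^0\le L\}$. The unboundedness of the jumps is then dealt with once and for all by a concentration estimate for $H_T^0$ itself, which is bounded by $\tfrac{C}{N}$ times an empirical mean of the i.i.d.\ variables $[(R_0^N(j))^2+J_T(j)^2]J_T(j)$, yielding $\P(H_T^0\ge c_T/N)\le Ce^{-cN}$; Bernstein with $L=c_T/N$, $z=\varepsilon\sqrt{c_T}N^{-1/4}$ then closes the argument. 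This is the idea your proposal is missing.

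Two smaller points. First, in Step 2 your containment $\{\int_0^T 1_{\{\bar R_s^N>R\}}\bar R_s^N\,ds>0\}\subseteq\{\sup_{s\le T}\bar R_s^N>R\}$ replaces the integral by a \emph{time-uniform} deviation of the empirical mean, which the pointwise Lyapunov bound \eqref{eq:expboundrt} does not directly give (the crude domination $\sup_{s\le T}\check R^N_s(i)\le R_0^N(i)+J_T(i)$ has mean growing like $KT$, so your fixed $R$ is too small); the paper instead keeps the integral, applies Markov and Cauchy--Schwarz, and uses only the pointwise Chernoff bound $\P(\bar R_s^N>R)\le e^{-\chi N}$. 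Your route is repairable by a grid argument but as written it is not justified. Second, your closing heuristic that the overall rate $e^{-c\sqrt N}$ is forced by one-big-jump events for $(R_0^N(i))^2$ is a reasonable structural observation, but it plays no role in the paper's proof, where $\sqrt N$ enters simply through the choice $z\asymp N^{-1/4}$, $L\asymp N^{-1}$ in the Bernstein bound.
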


\begin{proof} In the sequel we shall suppose w.l.o.g. that $ KT \geq 1.$\\
{\it Step 1.}
Fix some  $0 < t_0 < T $ and let  $ t > 0 $ such that $t_0 + t \le T.$  Then Equation \eqref{eq:tobeusedlaterone} implies, for any $ R > 0 , $ 
\begin{multline}\label{eq:tobeusedlatertwo}
\sup_{t_0 \le s \le t_0 + t } | U^N_s - \tilde U^N_s | \le | U^N_{t_0} - \tilde U^N_{t_0}  | +   t \cdot \left( \beta  + \alpha  L_\varphi R \right)  \sup_{t_0 \le s \le t_0 +  t } | U^N_s - \tilde U^N_s |  \\
+ \alpha  K  \int_0^T 1_{ \{ \bar R_s^N > R  \}} \bar R_{s}^N  ds + C t \frac1N \sum_{i=1}^N \sup_{ t_0 \le s \le t_0 + t } | R_s^{N } (i)  - \tilde R_s^{N } (i)  | + \sup_{t_0 \le s \le t_0 + t} | M_s^{N, 1 } | .
\end{multline}
Applying the same argument to $R^N  $ and $\tilde R^N, $ we also have that 
\begin{multline*}
 \frac1N \sum_{i=1}^N \sup_{ t_0 \le s \le t_0 +  t } | R_s^{N } (i)  - \tilde R_s^{N } (i)  |  \le \frac1N \sum_{i=1}^N  | R_{t_0}^{N } (i)  - \tilde R_{t_0}^{N } (i)  | +  \lambda t \left( \frac1N \sum_{i=1}^N \sup_{ t_0 \le s \le t_0 + t } | R_s^{N } (i)  - \tilde R_s^{N } (i)  | \right) \\
 + \sup_{t_0 \le s \le t_0 +  t } | M_s^{N, 3 } | + L_\varphi t \sup_{t_0 \le s \le t_0 +  t }  | U^N_s - \tilde U^N_s |  .
\end{multline*} 
Choosing $s_1 $ sufficiently small such that $ \lambda s_1 \le \frac12 $ and also $ 2 L_\varphi s_1 \le 1,$  we deduce from this that for all $ t \le s_1,$ 
\begin{multline}\label{eq:devr}
 \frac1N \sum_{i=1}^N \sup_{ t_0 \le s \le t_0 + t } | R_s^{N } (i)  - \tilde R_s^{N } (i)  |  \le  \frac2N \sum_{i=1}^N  | R_{t_0}^{N } (i)  - \tilde R_{t_0}^{N } (i)  | + 2  \sup_{t_0 \le s \le t_0 + t } | M_s^{N, 3 } | \\
 +  \sup_{t_0 \le s \le t_0 + t }  | U^N_s - \tilde U^N_s | , 
\end{multline} 
and replacing in \eqref{eq:tobeusedlatertwo}, we obtain
\begin{multline*}
 \sup_{t_0 \le s \le t_0 + t } | U^N_s - \tilde U^N_s |  \le  | U^N_{t_0} - \tilde U^N_{t_0}  | +   \frac{2 Ct}{N} \sum_{i=1}^N  | R_{t_0}^{N } (i)  - \tilde R_{t_0}^{N } (i)  | \\ 
 +  t \cdot \left( \beta  + \alpha  L_\varphi R +  C   \right)  \sup_{t_0 \le s \le t_0 + t } | U^N_s - \tilde U^N_s | \\
 +  \alpha K \int_0^T 1_{ \{ \bar R_s^N > R  \}} \bar R_{s}^N ds 
 + 2 C t   \sup_{s \le t } | M_s^{N, 3 } |   + \sup_{s \le t} | M_s^{N, 1 } | .
\end{multline*}
Consequently we may choose $s_2 \le s_1 $ such that 
$$  s_2  \cdot \left( \beta  + \alpha  L_\varphi  R  +  C  \right) \le \frac12 \mbox{ and also } 2 C s_2 \le 1 .$$
Therefore, for all $ t \le s_2, $ for a suitable constant $C,$ 
\begin{multline*}
 \sup_{t_0 \le s \le t_0 + t } | U^N_s - \tilde U^N_s |  \le  2 | U^N_{t_0} - \tilde U^N_{t_0}  | +   \frac{2}{N} \sum_{i=1}^N  | R_{t_0}^{N } (i)  - \tilde R_{t_0}^{N } (i)  | \\
 +  C \left(   \int_0^T 1_{ \{ \bar R_s^N > R  \}} \bar R_{s}^N ds 
+   \sup_{t_0 \le s \le t_0 + t } | M_s^{N, 3 } |   +  \sup_{t_0 \le s \le t_0 + t } | M_s^{N, 1 } | \right) , 
\end{multline*}
and replacing this once more in  \eqref{eq:devr}, we finally obtain for all $ t_0, t$ such that $ t_0 + t \le T,$ 
\begin{multline}\label{eq:firstest}
\sup_{t_0 \le s \le t_0 + t }| U^N_s - \tilde U^N_s | +  \frac1N \sum_{i=1}^N \sup_{t_0 \le s \le t_0 + t } | R_s^{N } (i)  - \tilde R_s^{N } (i)  |\\
 \le C | U^N_{t_0} - \tilde U^N_{t_0}  | 
+    \frac{C}{N} \sum_{i=1}^N  | R_{t_0}^{N } (i)  - \tilde R_{t_0}^{N } (i)  |  \\
 +  C \left(   \int_0^T 1_{ \{ \bar R_s^N > R  \}} \bar R_{s}^N ds 
+  \sup_{ s \le T } | M_s^{N, 3 } |   + \sup_{ s \le T} | M_s^{N, 1 } | \right) 
\end{multline}
for all $t \le s_2.$ 

{\it Step 2.} We apply the above inequality \eqref{eq:firstest} with  $ t_0 = 0, t_0 = s_2, \ldots, t_0 =  \ell s_2 , $ for $ \ell = [ T/ s_2 ], $ where $[x] $ denotes the integer part of $ x> 0, $ and with $ t = s_2 $ (except in the last case $ t_0 =\ell s_2$ where we take $ t = T - \ell s_2$).  Write for short 
$$Y_T :=  C \left(  \int_0^T 1_{ \{ \bar R_s^N > R  \}} \bar R_{s}^N ds 
+  \sup_{ s \le T } | M_s^{N, 3 } |   + \sup_{ s \le T} | M_s^{N, 1 } | \right) $$
and 
$$ \Delta ( k) :=   \sup_{k s_2 \le s \le (k+1) s_2 \wedge T  }| U^N_s - \tilde U^N_s | +  \frac1N \sum_{i=1}^N  \sup_{k s_2 \le s \le (k+1) s_2 \wedge T  } | R_s^{N } (i)  - \tilde R_s^{N } (i)  | , 0 \le k \le \ell .$$ 
\eqref{eq:firstest} implies that 
$$ \Delta ( k+1) \le  C \Delta (k) + Y_T, \mbox{ for all } k < \ell  $$
(choose $t_0 = (k+1) s_2 $ in \eqref{eq:firstest}).
Since $ U^N_0 = \tilde U^N_0 = u_0 $ and $ R^N_0 (i ) = \tilde R^N_0 (i ), $ for all $ 1 \le i \le N, $ we have moreover that $ \Delta ( 0 ) \le Y_T.$ Therefore, 
$$ \Delta ( \ell ) \le (1 + C + \ldots C^\ell )   Y_T \le C^{\ell +1} Y_T $$
(we suppose w.l.o.g. that $ C \geq 1 $).  
Finally, $ \ell \le T/s_2 $ and $s_2 = s_2 (R) $ not depending on $T$ imply that, for  $ C_R= C^{\frac{1}{s_2 (R)} }$ depending only on the parameters of the model and on $R,$ but not on $T,$ 
\begin{multline}\label{eq:nice}  \sup_{s\le  T  }| U^N_s - \tilde U^N_s | +  \frac1N \sum_{i=1}^N \sup_{ s \le T } | R_s^{N } (i)  - \tilde R_s^{N } (i)  | \le \\
\le  (C_R)^{T+1} \left(  \int_0^T 1_{ \{ \bar R_s^N > R  \}} \bar R_{s}^N ds
+  \sup_{ s \le T } | M_s^{N, 3 } |   + \sup_{ s \le T} | M_s^{N, 1 } | \right) .
\end{multline} 
This is the first part of \eqref{eq:devontun}.

{\it Step 3.} We now turn to the second step in our approximation procedure. It is treated analogously to the two previous steps. We have by \eqref{eq:devubis}, for all $t_0, t $ such that $ t_0 + t \le T,$ 
$$\sup_{t_0 \le s \le t_0 + t } | \tilde U^N_s - u_s | \le   | \tilde U^N_{t_0} - u_{t_0} | + t ( \beta + C(r_0 +\frac{K}{\lambda})  ) \sup_{t_0 \le s \le t_0 + t } | \tilde U^N_s - u_s |  + \alpha K t \sup_{t_0 \le s \le t_0 + t } | r_s - \bar R_s^N | ,$$
where we have used that for all $s \le T , $ $ r_s \le r_0 +\frac{K}{\lambda}  .$ 

Analogously, \eqref{eq:devrbis} implies that
$$ \sup_{t_0 \le s \le t_0 + t } | \bar R^N_s -r_s | \le | \bar R^N_{t_0} - r_{t_0} | + \lambda t \sup_{t_0 \le s \le t_0 + t } | r_s - \bar R^N_s| + L_\varphi t \sup_{t_0 \le s \le t_0 + t } | \tilde U^N_s - u_s | + \sup_{ s \le T}| M_s^{N, 2 } |.$$
Putting the two together and using similar arguments as in the first step, for a choice of $t_1  \le s_2 $ not depending on $T$ and sufficiently small, 
\begin{equation}\label{eq:secondest}
\sup_{t_0 \le s \le t_0 + t } |  \tilde U^N_s -u_s |  +\sup_{t_0 \le s \le t_0 + t } |  \bar R^N_s - r_s |  \le C  | \tilde U^N_{t_0} - u_{t_0} | +C | \bar R^N_{t_0} - r_{t_0} | +  C \sup_{ s \le T} | M_s^{N, 2 } |   .
\end{equation}
Iterating the above inequality for $ t_0 = 0, \ldots, t_0 = k t_1, $ with $ k = [ T/ t_1 ]$  as in Step 2. above implies then 
\begin{equation}\label{eq:thirdest}
\sup_{ s \le T } |  \tilde U^N_s -u_s |  +\sup_{ s \le T } |  \bar R^N_s - r_s |  \le  C^{T+1}   | \bar R^N_{0} - r_{0} | +  C^{T+1}  \sup_{ s \le T} | M_s^{N, 2 } |   .
\end{equation}
Finally, \eqref{eq:thirdest} together with \eqref{eq:nice} imply \eqref{eq:devontun}.

{\it Step 4.} We use a large deviations upper bound to obtain
$$ \P (|\bar  R_0^N -r_0|  \geq \varepsilon  N^{-1/4}  ) \le e^{- c \sqrt{N}  } ,$$
for  $ \varepsilon >  0,$ due to our assumptions on the law $ g_0 $ of $ R_0^{N } (1) .$

Moreover, we use similar arguments as those in the proof of Theorem \ref{theo:coupling} to deduce that for any $ \eta > 0, $
$$
\P (\int_0^T 1_{ \{\bar R_s^N > R  \}}  \bar R_{s}^N ds    \geq \eta  ) \le \frac{C}{\eta}  \int_0^T \sqrt{ \P (\bar R_s^N > R )} ds  .
$$
We wish to use again a large deviations upper bound to control $\P (\bar R_s^N > R ).$ Since $ \int_0^\infty e^{u r } g_0 (dr) < \infty , $ applying \eqref{eq:tobeusedlater} with $ V(x) = e^{ux} $ gives 
\begin{equation}\label{eq:expboundrt}
 \sup_{ t \geq 0 } \E e^{ u \check R^N_t (i) } < \infty .
\end{equation} 
Therefore we may  choose $ R $ such that $ \log \left(\sup_{t \geq 0} \E ( e^{ u \check R^N_t (i )} )\right) - u R := - \chi < 0 .$ Then
$$
\P (\bar R_s^N > R ) \le  \P ( \sum_{i=1}^N  \check R_s^N (i )  > N  R ) \le e^{- N u R } \left( \E ( e^{ u \check R^N_s (i )})  \right)^N \le e^{ - \chi N } , 
$$
implying that 
$$ \P (\int_0^T 1_{ \{\bar R_s^N > R  \}}  \bar R_{s}^N ds    \geq \eta  ) \le \frac{C}{ \eta } T e^{- \frac12 \chi N}.$$

To deal with the square integrable martingale terms $ M^{N, i }, 1 \le i \le 3,  $ we rely on the  Bernstein inequality of \cite[Theorem 3.3]{vanzanten}). For any square integrable purely discontinuous martingale $ M, $ writing 
$$ [M]_t = \sum_{ s \le t} (\Delta M_s)^2 , \; \mbox{ and }\;  < M>_t \; \mbox{ for its predictable compensator},$$ 
and putting, for a fixed $a,$ 
$$ H_t^a := \sum_{ s \le t} (\Delta M_s)^2  1_{\{ | \Delta M_s| > a \} } + <M>_t , $$
we have that 
\begin{equation}\label{eq:bernstein}
\P ( M_t^* \geq z , H_t^a \le L ) \le 2 \exp \left( - \frac12 \frac{z^2 }{L} \psi ( \frac{az}{L} ) \right) , 
\end{equation} 
where $ \psi ( x) = ( 1 + x/3) ^{- 1 } $ and $ M_t^* = \sup_{s \le t} |M_s| .$ 

Observing that $ M^{N, 2 } $ and $M^{N, 3 }$ have jumps bounded by $ \frac1N $ and quadratic variation bounded by 
$$ < M^{N, i }>_T \; \le \frac{K T }{ N} ,\;  i = 2, 3 , $$  
we apply \eqref{eq:bernstein} with $ a = \frac1N$ and $ L = \frac{KT}{N} $ to obtain, for $ i=2, 3 , $
$$ \P ( \sup_{ s \le T} | M_s^{N, i } | \geq z ) \le  2 \exp \left( - \frac{1}{2} \frac{z^2 }{KT } \psi ( \frac{z}{KT} ) N \right).$$
Choosing $ z = \frac{\varepsilon KT}{N^{1/4}}, $ for some $0 <  \varepsilon < 1, $  this yields, for $ i=2, 3, $ 
\begin{equation}\label{eq:dev1}
 \P ( \sup_{ s \le T} | M_s^{N, i } | \geq  \frac{\varepsilon KT}{N^{1/4}}  ) \le 2 \exp \left( - \frac12  \varepsilon^2 \psi ( 1) \sqrt{N} KT  \right)  \le 2  \exp \left( - \frac12  \varepsilon^2 \psi ( 1) \sqrt{N}  \right) ,
\end{equation}
where we have used that  $KT \geq 1, $ that $ \psi (  \frac{z}{KT} ) \geq \psi ( 1), $ since  $ \psi $ is decreasing, and $\frac{z}{KT} \le 1.$ 

We now treat $M_t^{N, 1 } .$  In the following we shall apply \eqref{eq:bernstein} with $ a =0.$ Then 
$$ H_T^0 \le \frac{\alpha^2 }{N^2 } \sum_{j=1}^N \int_0^T \int_{\R_+} ( R_{s-}^{N } (j) )^2   1_{\{ z \le \varphi ( U_{s-}^{N} \}} \bM^j ( ds, dz ) + \frac{\alpha^2 K  }{N^2 } \sum_{j=1}^N \int_0^T  ( R_{s}^{N} (j) )^2 ds . $$
Using that $ R_s^{N } (j) \le R_0^{N }(j)  + J_T (j) , $ we get 
$$  \frac{\alpha^2 }{N^2 } \sum_{j=1}^N \int_0^T \int_{\R_+} ( R_{s-}^{N }(j) )^2  1_{\{ z \le \varphi ( U_{s-}^{N} \}} \bM^j ( ds, dz ) 
  \le \frac{ 2 \alpha^2}{N} \left( \frac1N \sum_{ j=1}^N  \left[ (R^{N }_0 (j))^2 +  (J_T (j) )^2 \right] J_T (j) \right)  .
$$
This implies that $ H_T^0 $ possesses exponential moments; that is, there exists some $ \nu > 0 $ such that $ \E e^{\nu H_T^0 } < \infty .$ Moreover, since $ \sup_s \E ( R_{s}^{N } (j) )^2 < \infty , $ $ \E H_T^0 \le 2 \frac{ \alpha^2 K}{N} T  \sup_t \E ( R_{s}^{N } (j) )^2 = C T /N . $ 
Therefore, using  a large deviations upper bound, choosing $c_T= C T+1  ,$ we deduce that for a suitable constant $ C ,$
$$ \P ( H_T^0 \geq c_T / N ) \le C e^{ - c N } .$$
Applying \eqref{eq:bernstein} with $ L = c_T/ N , z^2 =\varepsilon ^2 c_T N^{-1/2}  $ and $a = 0 , $  we deduce from this that 
$$ \P ( \sup_{ s \le T } | M_s^{N, 1 } | \geq \varepsilon \sqrt{c_T} N^{-1/4}    ) \le  C e^{ - \frac12  \varepsilon^2  \sqrt{N} } + C e^{ - c N }\le C e^{ - c  \varepsilon^2  \sqrt{N} }  .$$
Consequently, choosing e.g. $\eta = \varepsilon N^{- 1/4} , $ 
\begin{multline*}
 \P \left( \sup_{s \le T} \left( | U_s^N - u_s |  + | \frac1N \sum_{i=1}^N R_s^{N } (i)  - r_s | \right)\geq N^{-1/4} (C_R)^{T+1}  [ 2\varepsilon + \varepsilon T^{1/2} + 2 KT \varepsilon ] \right)\\
  \le 
C T  e^{ - c   \varepsilon^2  \sqrt{N} }  .
\end{multline*}
Taking $N_0$ such that $  N_0^{-1/4} (C_R)^{T+1}   [ 2 + T^{1/2} + 2 KT  ] \le N_0^{-1/5}  ,$ we obtain the assertion.
\end{proof}

We close this section with the proof of  the second item of Theorem \ref{theo:4}. 

\begin{proof}[Proof of the third item of Theorem \ref{theo:4}] 
Since $  (u^{max}, r^{max}  )$ is locally attracting, $t_1 $ is finite (and deterministic). 
We apply  the first item of Theorem \ref{theo:4} with $ T = t_1 .$ It implies that for all $ N \geq N_0 ,$ 
$$
 \P \left(   | U_{t_1} ^N - u_{t_1} |  + | \frac1N \sum_{i=1}^N R_{t_1} ^{N } (i) - r_{t_1} | \geq   \varepsilon    \right) \le \\
C_{t_1} e^{ - c_{t_1} \varepsilon^2    \sqrt{N}  } ,
$$ 
which implies the assertion. 
\end{proof}

\section*{Acknowledgements}
The authors thank two anonymous referees for useful remarks and careful reading. AG and EL thank the Gran Sasso Science Institute (GSSI) for hospitality and support. This research is part of USP
project {\em Mathematics, computation, language and the brain} and of
FAPESP project {\em Research, Innovation and Dissemination Center for
  Neuromathematics} (grant 2013/07699-0). AG is partially supported by
CNPq fellowship (grant 311 719/2016-3.)

\end{document}